\title{Shrinking targets on square-tiled surfaces}
\author{Josh Southerland}
\address{Indiana University, Bloomington, IN 47405}
\email{jwsouthe@iu.edu}
\begin{document}

\begin{abstract}
  We study a shrinking target problem on square-tiled surfaces. We show that the action of a subgroup of the Veech group of a regular square-tiled surface exhibits Diophantine properties. This generalizes the work of Finkelshtein, who studied a similar problem on the flat torus~\cite{Fin16}.
\end{abstract}

\setstretch{1.25}

\maketitle


 \section{Introduction}\label{sec:1-Introduction}
 
 In this paper we study the action of the abundant set of derivatives of affine linear maps on a regular square-tiled surface, which is a particular type of translation surface. The set of derivatives we study is an arithmetic group, and we show that the action of subgroups of these arithmetic groups exhibit Diophantine properties.
 
 \subsection{Definition of a translation surface}
 
 A \emph{translation surface} is a pair $(X,\omega)$ where $X$ is a compact, connected Riemann surface without boundary and $\omega$ a non-zero holomorphic differential on $X$.
 
 There is an equivalent definition of a translation surface that is more intuitive: a translation surface is an equivalence class of polygons or sets of polygons in the plane $\C$ such that each edge is identified by translation to a parallel edge on the opposite side of the polygon (or opposite side of a polygon in the set of polygons). The equivalence is given by a cut-and-paste procedure that preserves the positive imaginary direction relative to the ambient $\C$. 
 
 \begin{figure}[h]
   \centering
   \def\svgwidth{\columnwidth}
   \rotatebox{0}{\scalebox{0.3}{
\begingroup%
  \makeatletter%
  \providecommand\color[2][]{%
    \errmessage{(Inkscape) Color is used for the text in Inkscape, but the package 'color.sty' is not loaded}%
    \renewcommand\color[2][]{}%
  }%
  \providecommand\transparent[1]{%
    \errmessage{(Inkscape) Transparency is used (non-zero) for the text in Inkscape, but the package 'transparent.sty' is not loaded}%
    \renewcommand\transparent[1]{}%
  }%
  \providecommand\rotatebox[2]{#2}%
  \newcommand*\fsize{\dimexpr\f@size pt\relax}%
  \newcommand*\lineheight[1]{\fontsize{\fsize}{#1\fsize}\selectfont}%
  \ifx\svgwidth\undefined%
    \setlength{\unitlength}{754.42995258bp}%
    \ifx\svgscale\undefined%
      \relax%
    \else%
      \setlength{\unitlength}{\unitlength * \real{\svgscale}}%
    \fi%
  \else%
    \setlength{\unitlength}{\svgwidth}%
  \fi%
  \global\let\svgwidth\undefined%
  \global\let\svgscale\undefined%
  \makeatother%
  \begin{picture}(1,0.95481272)%
    \lineheight{1}%
    \setlength\tabcolsep{0pt}%
    \put(0,0){\includegraphics[width=\unitlength,page=1]{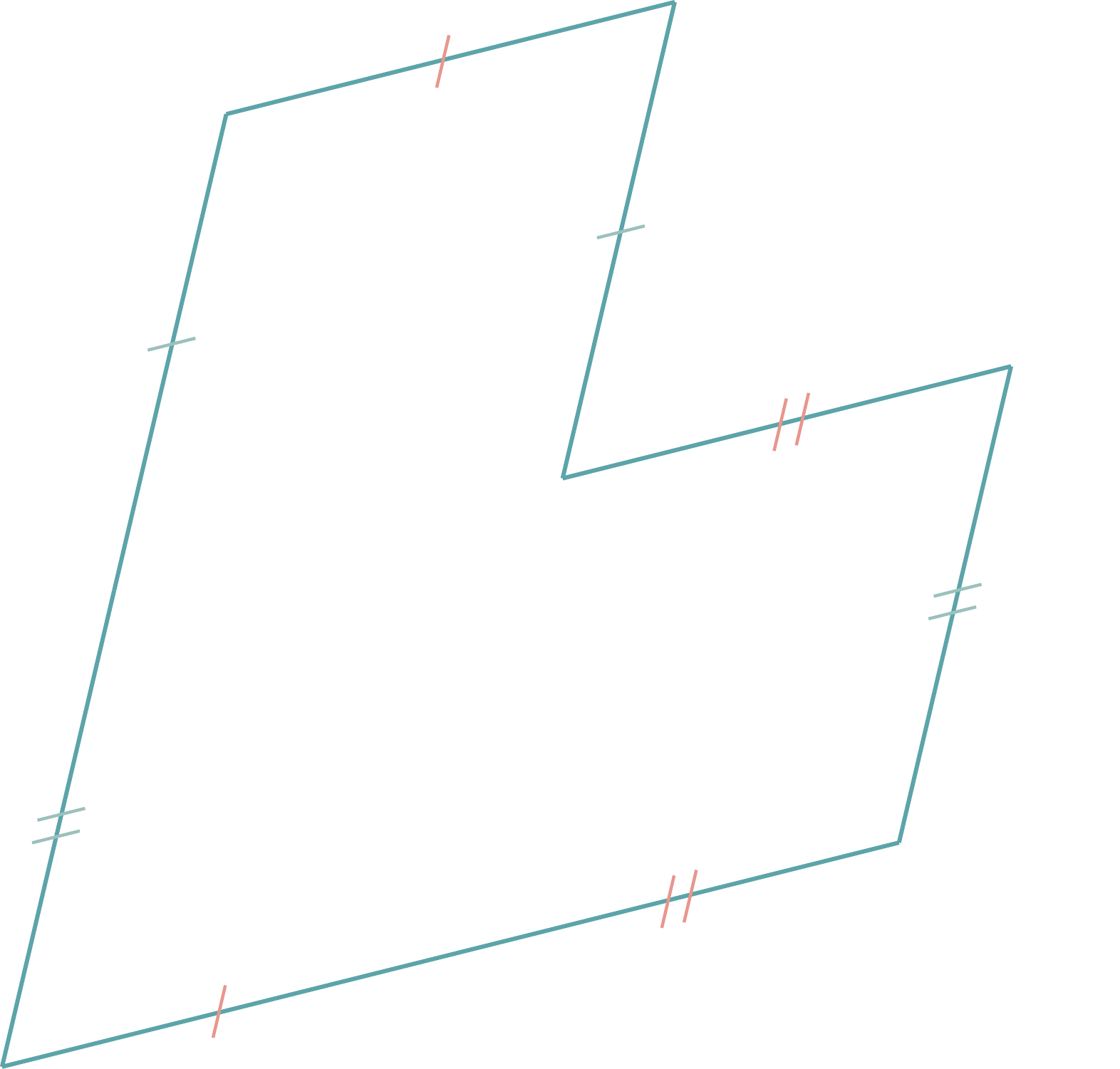}}%
    \put(0.97944757,0.52471785){\color[rgb]{0,0,0}\rotatebox{14.03624347}{\makebox(0,0)[lt]{\lineheight{1.25}\smash{\begin{tabular}[t]{l}`\end{tabular}}}}}%
  \end{picture}%
\endgroup%
}}
   \caption{Translation surface}
   \label{fig:TS} 
 \end{figure} 
 
 Note that by imposing the condition that sides are identified to opposite edges of the polygon we ensure that the positive imaginary direction is well-defined globally. Translation surfaces are flat surfaces away from a finite set of singular points, and at the singular points are cone points whose angles are integer multiples of $2\pi$. 

 \begin{definition}[Square-tiled Surface~\cite{Ma18},~\cite{Zm11}]\label{sts}
 A \emph{square-tiled surface} is a translation surface $(X,\omega)$ given by a (finite) branched cover over the square torus, $q: X \rightarrow \T^2$, branched over $0$. The one-form $\omega$ is given by the pullback of $dz$ under the covering map $q$, $\omega = q^*(dz)$. 
  \end{definition}

 Square-tiled surfaces are so named because they have a polygonal representation which looks like a tiling of squares (each square projects to the square torus in the branched cover). Consequently, square-tiled surfaces come with a natural combinatorial description: $(M,\sigma,\tau)$, where $M$ denotes the degree of the cover and $\sigma, \tau \in S_M$ are permutations that encode gluing information. $\sigma(i) = j$ means that the  right edge of the $i^{th}$ square is glued to the left edge of the $j^{th}$ square.  $\tau(i)=j$ means that the top edge of the $i^{th}$ square is glued to the bottom edge of the $j^{th}$ square.

 \begin{definition}[Square-tiled Surface~\cite{Ma18},~\cite{Zm11}]\label{sts}
 A \emph{regular} square-tiled surface is a square-tiled surface $(X,\omega)$ whose automorphism group (automorphisms of the translation structure) is transitive on the set of squares, $q^*((0,1)^2)$. 
  \end{definition}
 
 For the equivalence of the different definitions of a translation surface or square-tiled surface, the reader is encouraged to visit~\cite{Wri15},~\cite{Zm11}. 

 \subsection{The $SL_2(\R)$-action}
 
 The group $SL_2(\R)$ acts on the moduli space of translation surfaces, where the action of a matrix is just the usual linear action on the polygons. Since the linear action sends parallel lines to parallel lines, the action sends a translation surface to a translation surface. In fact, the natural action in this setting is $GL_2^+(\R)$, but for our purposes, the action of $SL_2(\R)$ is more relevant since we are only interested in volume preserving maps. 

 For a more detailed introduction to translation surfaces, the reader is encouraged to consult one of the many excellent surveys on the topic~\cite{Che17}, ~\cite{Wri15}, ~\cite{Zor06}.
 
 \subsection{The Dynamical System} 

 \subsubsection{Action of the Veech Group}
 
 The stabilizer of the action at a translation surface $(X,\omega)$ is called the \emph{Veech group} of this surface, denoted $SL(X,\omega)$. For an example of a stabilizing element, consider the unit square with opposite sides identified by translation (a torus) and let $g = \begin{bmatrix} 1 & 1 \\ 0 & 1 \end{bmatrix}$. 

 \begin{figure}[h]
   \centering
   \def\svgwidth{\columnwidth}
   \rotatebox{0}{\scalebox{0.9}{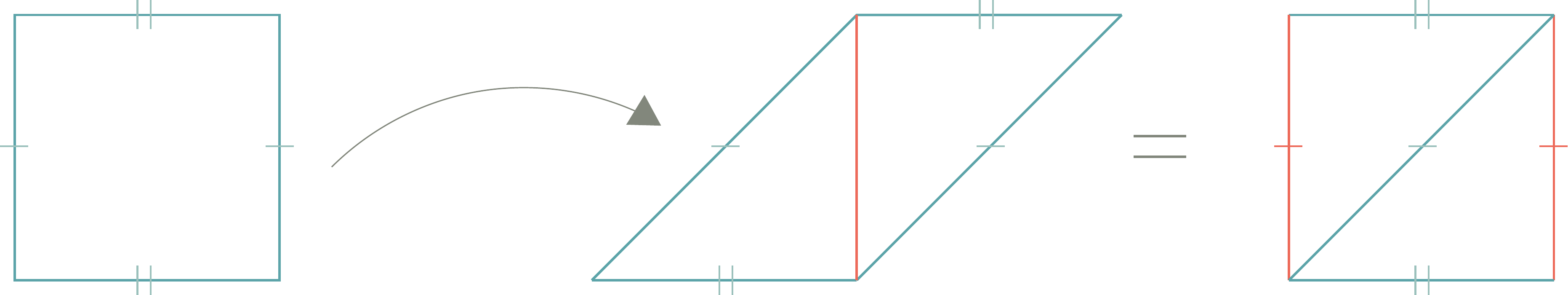}}
   \caption{Stabilizing element of the Veech group}
   \label{fig:stab} 
  \end{figure} 

 Using the cut-and-paste procedure pictured in Figure \ref{fig:stab}, we can reassemble the new polygon as the old, while respecting the ``north" direction on the surface. This example shows us that the Veech group is not always trivial. The action of the matrix appears related to linear maps on the surface, and in fact, this is true. We can identify the Veech group with the collection of \emph{derivatives} of affine linear maps on the surface~\cite{Vee89}. 
 
 There exist translation surfaces surfaces, for example, square-tiled surfaces, whose stabilizers are lattices in $SL_2(\R)$. Such lattices are necessarily discrete, non-cocompact, finite covolume subgroups of $SL_2(\R)$~\cite{HubSch04}. We call these surfaces \emph{lattice surfaces}. Veech groups of lattice surfaces contain a hyperbolic element, which can be represented as a matrix with expanding and contracting eigenspaces. The corresponding linear action of this element, after several applications, sufficiently ``mixes" the points on the surface. In fact, the map will be \emph{ergodic} (with respect to the Lebesgue measure on the surface). The existence of this element means that the action of the Veech group is \emph{ergodic}. Hence, we can ask questions about the \emph{density} of the orbits. One way to do this is to frame the question as a shrinking target problem. Fix a lattice surface $S$ with Veech group $\Gamma$, and pick any $y \in S$. Let $B_{g}(y)$ denote the open ball of area (or measure) $\phi(\lvert \lvert g \rvert \rvert)$ (a decreasing function of the operator norm). Does almost every $x \in S$ have the property that $g\cdot x \in B_g(y)$ for infinitely many $g \in \Gamma$? How fast can $\phi$ decrease (the target shrink) before this no longer holds?

 \begin{figure}[h]
  \centering
  \captionsetup{justification=centering}
  
  \def\svgwidth{\columnwidth}
    \includegraphics[trim=0 100 0 0,clip,width=0.4\textwidth]{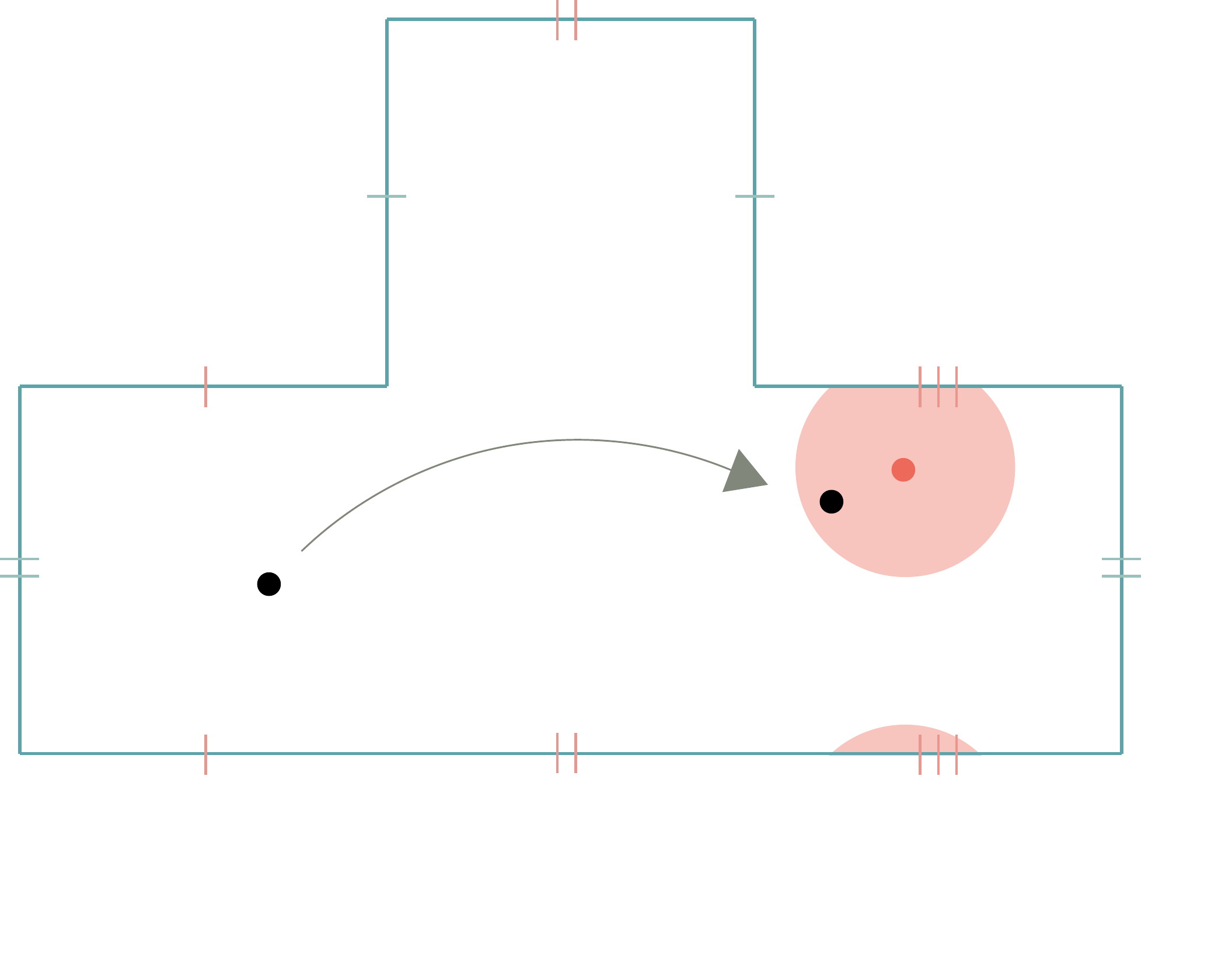}
  \caption{Hitting the target}
  \label{fig:targ} 
 \end{figure}

 In 2016, Finkelshtein studied a shrinking target problem on the square torus~\cite{Fin16}. The torus is an example of a translation surface and $SL_2(\Z)$ is its Veech group. Moreover, $SL_2(\Z)$ is a lattice subgroup of $SL_2(\R)$, so the torus is an example of a lattice surface. Finkelshtein showed that the action of $SL_2(\Z)$ on the torus exhibits certain Diophantine estimates. Finkelshtein's proof relies on a fundamental connection between the dynamics of the Veech group action and the Laplacian on the torus. 
 
 The action of the Veech group on the surface induces a group representation, the $\emph{Koopman representation}$, $\pi: SL_2(\Z) \rightarrow U(L^2(\T^2))$, where $\pi(g) f(x) = f(g^{-1}x)$. Recall that the eigenfunctions of the Laplacian, $\Delta = -(\partial_x^2 + \partial_y^2)$, are solutions to $\Delta f = \lambda f$. We can compute eigenfunctions: $e^{2\pi i m x}e^{2\pi i n y}$, where $(m,n) \in \Z^2$. Let $g = \begin{bmatrix} a & b \\ c & d \end{bmatrix} \in SL_2(\Z)$, then

 \begin{align*}
     \pi(g)e^{2\pi i m x}e^{2\pi i n y} &= e^{2\pi i (dm-cn) x}e^{2\pi i (an-bm)y} \text{.}
 \end{align*}
 
 This is significant: the Koopman representation sends eigenspaces of the Laplacian to eigenspaces. In other words, \emph{the action of the Veech group plays nicely with the spectral properties of the Laplacian.} In fact, we can say precisely how the eigenspaces are permuted by noting how $(m,n)$ is permuted: by multiplying on the left by the inverse transpose of $g$. 

 
 In what follows, we study the action of the Veech group on a \emph{square-tiled surface}. This problem is challenging for the following reason: the action of the Veech group on a translation surface does not, in general, respect the eigenspaces of the Laplacian. 
 
 We are able to bypass these difficulties by leveraging properties of the branched cover over the torus. Our main result shows that the action of a subgroup of a Veech group acting on a regular square-tiled surface exhibits similar Diophantine properties that are governed by the \emph{critical exponent} of the subgroup. Recall the definition of critical exponent:

 \begin{definition}[Critical Exponent, $\delta_{\Gamma}$]\label{critexp}
 Let $\Gamma$ be a Fuchsian group. The critical exponent, $\delta_{\Gamma}$, is 
 
 \begin{equation*}
 \delta_{\Gamma} := \limsup_{R \rightarrow \infty} \frac{\mathrm{log}(\#\{g \in \Gamma: d_{\Hh}(g.x_0,x_0) \leq R \})}{R} \text{,}
 \end{equation*}
 \end{definition}
 
 \noindent for any $x_0$, where $g.x_0$ denotes the action of $g$ on $x_0$ by M\"obius transformation.  $\delta_{\Gamma}$ is independent of the basepoint $x_0$.
 
 The critical exponent $\delta_{\Gamma}$ is the exponent required for convergence in the Poincar\'e series of the group $\Gamma$~\cite{Bea68},~\cite{Pat76}, which is equivalent to the exponential growth rate of the number of points in the orbit of $\Gamma$ acting on the upper half-plane~\cite{Sul79} seen in Definition \ref{critexp}.
 
 Patterson~\cite{Pat76} showed that for a finitely generated Fuchsian group $\Gamma$, the critical exponent is precisely the Hausdorff dimension of the limit set, $\Lambda = \overline{\Gamma x} \cap S^1$, where $S^1$ is the circle at infinity. Sullivan~\cite{Sul79} showed that in the general case of a Fuchsian group, the critical exponent is the Hausdorff dimension of the \emph{radial} limit set, $\Lambda_r \subset \Lambda$ consisting of all points in the limit set such that there exists a sequence $\lambda_n x \rightarrow y$ remaining within a bounded distance of a geodesic ray ending at $y$. 
 
 These various interpretations are particularly relevant to our work since we obtain Theorem \ref{thm:1} indirectly through spectral estimates of the boundary representation of the subgroup $\Gamma$. 
 
 \begin{restatable}{thm}{thmA}\label{thm:1}
        Let $(X,\omega)$ be a regular square-tiled surface, and let $\Gamma$ be a subgroup of the Veech group $SL(X,\omega)$ with critical exponent $\delta_{\Gamma} > 0$. For any $y \in X$, for Lebesgue a.e. $x \in X$, the set 
 
                \begin{equation*}
                    \{g \in \Gamma : \lvert gx - y \rvert < \lvert\lvert g \rvert\rvert^{-\alpha} \}
                \end{equation*}
 
        \noindent is 
 
                \begin{enumerate}
                    \item finite for every $\alpha > \delta_{\Gamma}$
                    \item infinite for every $\alpha < \delta_{\Gamma}$
                \end{enumerate}
        \noindent where $\lvert \lvert \, \cdot \, \rvert \rvert$ is the operator norm of $g$ (as a linear transformation on $\R^2$). 
 \end{restatable}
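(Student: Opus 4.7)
The plan is to attack Theorem~\ref{thm:1} with the Borel-Cantelli lemma, handling the two directions separately. For the convergent direction ($\alpha > \delta_\Gamma$), the key observation is that each $g \in \Gamma$ acts on $X$ by an area-preserving affine map, so the set
\[
A_g := \{x \in X : |gx-y| < \|g\|^{-\alpha}\} = g^{-1}B\!\left(y, \|g\|^{-\alpha}\right)
\]
has Lebesgue measure at most $C\,\|g\|^{-2\alpha}$. Using the standard comparison $\|g\|^2 \asymp e^{d_\Hh(g\cdot i,\,i)}$ coming from the Cartan decomposition in $SL_2(\R)$, the sum $\sum_{g\in\Gamma}\mu(A_g)$ is comparable to the Poincar\'e series of $\Gamma$ at exponent $\alpha$, which converges for $\alpha > \delta_\Gamma$ by definition of the critical exponent. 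The first Borel-Cantelli lemma then gives finiteness almost everywhere.

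The divergent direction ($\alpha < \delta_\Gamma$) is the main content. The strategy, modeled on Finkelshtein's torus argument, is a quantitative (Kochen-Stone style) Borel-Cantelli argument requiring correlation bounds of the form
\[
\mu(A_g \cap A_h) \;\leq\; \mu(A_g)\,\mu(A_h) \;+\; \mathrm{error}(g,h),
\]
with the error decaying in $g,h$ fast enough that the product term dominates the sum. On the torus, Finkelshtein obtained such bounds by diagonalizing the Koopman representation on Laplacian eigenspaces, which $SL_2(\Z)$ permutes via the inverse-transpose rule. On a general square-tiled surface this approach fails at face value, because the affine action does not preserve the Laplacian eigenspaces of $X$.

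To bypass this, I would exploit the branched cover $\pi : X \to \T^2$. Functions of the form $f\circ\pi$ with $f \in L^2(\T^2)$ span a distinguished closed subspace of $L^2(X)$ on which (a finite-index subgroup of) $\Gamma$ acts compatibly with the torus action, so the Laplacian eigenspaces on $\T^2$ are permuted by $\Gamma$ exactly as in Finkelshtein's setting. I would approximate the indicator of the shrinking ball $B(y,\|g\|^{-\alpha}) \subset X$ by a smooth function pulled back from $\T^2$ through a partition of unity on the sheets of the cover, with a boundary error supported near the branch locus. The correlation estimate on $X$ then reduces to an eigenspace computation on $\T^2$, with the quantitative decay coming from spectral estimates on the restriction of the Koopman representation to $\Gamma$. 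These spectral estimates, whose exponents are governed by $\delta_\Gamma$, come from the theory of the boundary representation of $\Gamma$ on $L^2(\partial\Hh)$---the role of $\delta_\Gamma$ here reflects the Patterson-Sullivan identification of the critical exponent with the Hausdorff dimension of the (radial) limit set.

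The main obstacle is the branching itself: $\pi$ is not a genuine cover, so $\pi^*L^2(\T^2)$ does not exhaust $L^2(X)$, and the singular fibers introduce errors that must be shown negligible compared to the leading correlation term. A secondary difficulty is tracking the exponent $\delta_\Gamma$ through the chain of reductions---ball on $X$, pulled-back cutoff on $\T^2$, matrix coefficient of the Koopman representation, spectral data of the boundary representation---so that the final exponent in the divergence statement matches $\delta_\Gamma$ exactly rather than something merely comparable to it. Once both difficulties are absorbed into error terms that vanish in the Kochen-Stone summation, the divergence of the thickened Poincar\'e-type series for $\alpha < \delta_\Gamma$ forces infinitely many hits almost surely, completing the dichotomy.
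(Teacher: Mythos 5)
Your convergence direction ($\alpha > \delta_\Gamma$) is essentially correct and matches the paper's in substance: the paper sums over shells $S_{2n}$ using the counting estimate $|S_{2n}| = e^{2\delta_\Gamma n + O(1)}$ together with $\|g\| \leq e^n$ on $S_{2n}$, which is just a discretized form of your comparison with the Poincar\'e series at exponent $\alpha$. The divergence direction, however, is only a plan, and the plan diverges from what actually works in two ways that constitute genuine gaps. First, the mechanism: you propose a Kochen--Stone argument requiring pairwise correlation bounds $\mu(A_g \cap A_h) \leq \mu(A_g)\mu(A_h) + \mathrm{error}$, obtained by approximating the indicator of the shrinking ball by a smooth pullback from $\T^2$ via a partition of unity, with an error ``supported near the branch locus.'' This approximation step is exactly where your argument would stall: the indicator of a small ball in $X$ is not close to anything in $\pi^* L^2(\T^2)$ (its fiberwise average spreads mass over all $M$ sheets), so the ``error'' is not small and is not localized near the branch points. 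The paper avoids this entirely. It does not approximate; it applies the exact orthogonal projection $P(f) = A(f) - \int_X f\,d\mu$ (fiber-averaging onto $H$, then subtracting the mean) to the indicators of the target and of the bad set $E_n = X \setminus \bigcup_{\|g\| < e^n} g^{-1}\mathrm{Targ}$, and exploits the exact identity $\langle \pi_{H_0}(\mu_{2n}) T_n, B_n\rangle = \mu(\mathrm{Targ}_n)\mu(E_n)$, which holds because $g^{-1}\mathrm{Targ}_n$ and $E_n$ are disjoint by construction. Cauchy--Schwarz plus the spectral bound on $\|\pi_{H_0}(\mu_{2n})\|$ then gives $\sum_n \mu(E_n) < \infty$ directly, with no correlation error term to control. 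The components of the indicators orthogonal to $H$ never need to be estimated.

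Second, you apply boundary-representation spectral estimates to $\Gamma$ itself, but those estimates (Finkelshtein's Theorem, quoted as Theorem~\ref{thm:spectral}) are only available for \emph{convex cocompact} subgroups of $SL_2(\Z)$; an arbitrary subgroup $\Gamma \subset SL(X,\omega)$ need not admit the shell-counting asymptotics or the boundary theory you invoke. The paper closes this gap with Lemma~\ref{lem:bootstrap}: every subgroup of $SL_2(\Z)$ contains a convex cocompact subgroup with critical exponent within $\varepsilon$ of $\delta_\Gamma$, and infinitude of hits for the subgroup implies it for $\Gamma$, with $\varepsilon \to 0$ recovering the sharp threshold $\alpha < \delta_\Gamma$. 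Relatedly, the equivariance of the cover $q : X \to \T^2$ with the torus action is only guaranteed after passing to a finite-index subgroup (Lemma~\ref{lem:equivariance}), a reduction your sketch does not mention. Without these two reductions your argument proves the dichotomy only for convex cocompact subgroups of a reduced square-tiled surface's Veech group, not the stated theorem.
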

 
 \noindent In fact, Theorem \ref{thm:1} holds for parallelogram-tiled surfaces as well. As with Finkelshtein's result~\cite{Fin16}, our result has the added benefit that we can deduce Diophantine properties of thin subgroups of the Veech group.

 \begin{remark}
 The spectral theory of translations surfaces and, in particular, square-tiled surfaces, has been studied by Hillairet~\cite{Hil08},~\cite{Hil09}. 
 \end{remark}

 \subsection{Acknowledgements}\label{acknowledgements}
 
 The author thanks Jayadev Athreya for proposing this project and providing guidance, and to the anonymous referee for many helpful comments. Additionally, the author thanks Alexis Drouot, Dami Lee, Farbod Shokrieh, and Bobby Wilson for helpful discussions, and Chris Judge for helpful comments regarding Theorem \ref{thm:GJ}. Additionally, the author thanks Lior Silberman for identifying an error in an earlier version of this work, as well as for a series of informative discussions on the representation theory of groups of operators on singular spaces.  
 

 \section{Shrinking Targets}\label{sec:2-Shrinking}

 In this section, we will give a technical description of a shrinking target problem and identify the main obstacle that we must overcome to solve one. Throughout this section, $(X, \mathscr{B}, \mu)$ is a probability space, and $T: X \to X$ is a measure-preserving transformation, unless otherwise indicated.
 
 \subsection{Set-up}
 
 First, recall Poincar\'e recurrence.
 
 \begin{theorem}[Poincar\'e recurrence]
 Let $(X, \mathscr{B},\mu)$ be a probability space, let $T: X \rightarrow X$ be a measure preserving transformation, and let $E \in \mathscr{B}$.  Define a semigroup action on $X$ by the group $\N$ as follows:  $n\cdot x := T^{n}(x)$ for any $n \in \N$, where $T^0 = \mathrm{Id}$.  Then for almost every point $x \in E$, the set 
 \begin{equation*}
 \{n \in \N : n\cdot x \in E\}
 \end{equation*} 
 
 \noindent is infinite.  (In other words, the set of points in $E$ that return to $E$ infinitely often has full measure in $E$.) 
 \end{theorem}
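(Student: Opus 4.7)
The plan is to establish Poincaré recurrence in two stages: first prove the weaker statement that almost every $x \in E$ returns to $E$ at least once, and then bootstrap this to get infinitely many returns. To begin, define
\begin{equation*}
B_0 := \{x \in E : T^n x \notin E \text{ for all } n \geq 1\},
\end{equation*}
the measurable set of points that never come back. The central step is to show $\mu(B_0) = 0$.

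For this, I would argue that the preimages $T^{-n}(B_0)$ for $n \geq 0$ are pairwise disjoint. If $x \in T^{-n}(B_0) \cap T^{-m}(B_0)$ with $0 \leq n < m$, then $T^n x \in B_0 \subs E$ while $T^{m-n}(T^n x) = T^m x \in E$, contradicting the defining property of $B_0$. Since $T$ is measure-preserving, $\mu(T^{-n}(B_0)) = \mu(B_0)$ for every $n$, and since these sets are disjoint inside the probability space $X$, countable additivity forces $\sum_{n \geq 0} \mu(B_0) \leq 1$, which gives $\mu(B_0) = 0$.

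Next, I would promote this to infinitely many returns. Let
\begin{equation*}
C := \{x \in E : \#\{n \geq 1 : T^n x \in E\} < \infty\}
\end{equation*}
be the set of bad points. For $x \in C$, let $N(x) \geq 0$ be the largest index with $T^{N(x)}x \in E$ (taking $N(x) = 0$ when $x$ itself never returns). By construction $T^{N(x)}x$ lies in $E$ and has no subsequent iterate in $E$, i.e.\ $T^{N(x)}x \in B_0$. Hence $C \subs \bigcup_{N \geq 0} T^{-N}(B_0)$, and by $T$-invariance together with countable subadditivity, $\mu(C) \leq \sum_{N \geq 0} \mu(B_0) = 0$, giving the conclusion.

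Because this is a classical result, I do not expect a substantive obstacle; the only subtle bookkeeping is the second step, where one must convert the qualitative statement ``finitely many returns'' into the quantitative statement ``some forward iterate lands in $B_0$'', thereby reducing the full theorem to the one-return version already handled by the disjointness and measure-preservation argument.
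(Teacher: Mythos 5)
The paper states this theorem as classical background and gives no proof of its own, so there is nothing to compare against; your argument stands on its own. It is correct and is the standard textbook proof: the disjointness of the preimages $T^{-n}(B_0)$ combined with measure preservation and finiteness of $\mu$ forces $\mu(B_0)=0$, and your second step --- observing that a point with only finitely many returns has some forward iterate landing in $B_0$, so the bad set is contained in $\bigcup_{N\geq 0}T^{-N}(B_0)$ --- correctly upgrades ``at least one return'' to ``infinitely many returns.'' The only detail left implicit is the measurability of $B_0$ and $C$, which follows immediately from writing $B_0 = E \cap \bigcap_{n\geq 1}T^{-n}(X\setminus E)$ and similarly for $C$; this is worth one line but is not a gap.
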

 
 Given the additional hypothesis of ergodicity, we can strengthen Poincar\'e recurrence. If $T: X \to X$ is ergodic, then for any measurable set $E \in \mathscr{B}$ almost every $x \in X$ will land in $E$ infinitely often. In other words, $T^{n}(x) \in E$ for infinitely many $n \in \N$. And, in fact, we know how often the point returns. As $n \to \infty$, the ratio of $x$ landing in $E$ and $x$ landing outside of $E$ converges to the measure of the set $E$. However, we can not deduce any quantitative information about the density of the orbits. If we were interested in such information, we could ask the following: given a measurable set $E$, how quickly can we shrink the set $E$ (shrink the set for each application of the transformation $T$) and still have almost every $x \in X$ land in the shrinking sequence of sets infinitely often? More concretely, assume $X$ is a metric space, let $y \in X$, and let $B_{\phi(n)}(y)$ be a ball centered at $y$ with measure $\phi(n)$, where $\phi: \Z_{\geq 0} \to \R_{>0}$ is a decreasing function. How quickly can we decrease the function $\phi$ so that $T^n(x) \in B_{\phi(n)}(y)$ infinitely often for almost every $x \in X$?
 
 \begin{figure}[h]
  \centering
  \captionsetup{justification=centering}
  
  \def\svgwidth{\columnwidth}
    \includegraphics[trim=0 100 0 0,clip,width=0.4\textwidth]{ShrinkingTarget2.pdf}
  \caption{Hitting the target}
  \label{fig:targ2} 
 \end{figure}
 
 Historically, the key to solving such shrinking target problems has been to use the Borel-Cantelli lemma and its partial converse. 
 
 \begin{lemma}[Borel-Cantelli lemma and partial converse]\label{lem:bc}
 Let $(X,\mathscr{B},\mu)$ be a probability space and let $E_n$ be a sequence of measurable sets. 
 
 \begin{enumerate}
 \item (Borel-Cantelli lemma) If $\sum_{n} \mu(E_n) < \infty$, then the set of points $x \in X$ such that $x$ occurs infinitely often has measure $0$ ($\limsup_{n \to \infty} E_n$ has measure 0). 
 \item Conversely, if the $E_n$ are pairwise independent, and $\sum_{n} \mu(E_n) = \infty$, then the set $x \in X$ such that $x$ occurs infinitely often has full measure ($\limsup_{n \to \infty} E_n$ has full measure).  
 \end{enumerate}
 \end{lemma}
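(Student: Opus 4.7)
The plan is to handle the two parts separately, beginning with the easy direction and then applying a second-moment argument for the converse.

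For part (1), I would write $\limsup_{n} E_n = \bigcap_{N=1}^{\infty} \bigcup_{n \geq N} E_n$, which is the event that $x$ lies in $E_n$ for infinitely many $n$. By monotonicity of measure, for every $N$,
\begin{equation*}
    \mu\bigl(\limsup_n E_n\bigr) \leq \mu\Bigl(\bigcup_{n \geq N} E_n\Bigr) \leq \sum_{n \geq N} \mu(E_n).
\end{equation*}
Since $\sum_{n} \mu(E_n) < \infty$, the tails on the right tend to $0$ as $N \to \infty$, forcing $\mu(\limsup_n E_n) = 0$. This is essentially just countable subadditivity plus the definition of a convergent series, so no real subtlety arises here.

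For part (2), the goal is $\mu(\limsup_n E_n) = 1$, equivalently $\mu\bigl(\bigcup_{n \geq N} E_n\bigr) = 1$ for every $N$. I would fix $N$, set $S_M := \sum_{n=N}^{M} \mathbf{1}_{E_n}$, and apply the Paley--Zygmund / second-moment inequality
\begin{equation*}
    \mu(S_M > 0) \geq \frac{(\E[S_M])^2}{\E[S_M^2]}.
\end{equation*}
The key observation is that pairwise independence of the $E_n$ makes the off-diagonal covariances $\E[\mathbf{1}_{E_i}\mathbf{1}_{E_j}] - \mu(E_i)\mu(E_j)$ vanish for $i \neq j$, so the variance collapses to a diagonal sum,
\begin{equation*}
    \mathrm{Var}(S_M) = \sum_{n=N}^{M} \mu(E_n)(1-\mu(E_n)) \leq \E[S_M].
\end{equation*}
Hence $\E[S_M^2] \leq \E[S_M] + (\E[S_M])^2$, and the Paley--Zygmund bound gives $\mu(S_M > 0) \geq \E[S_M]/(1+\E[S_M])$. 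Since $\sum_n \mu(E_n) = \infty$ we have $\E[S_M] \to \infty$ as $M \to \infty$, so this lower bound tends to $1$. Because $\{S_M > 0\}$ is an increasing sequence of events whose union is $\bigcup_{n \geq N} E_n$, continuity from below yields $\mu\bigl(\bigcup_{n \geq N} E_n\bigr) = 1$, and intersecting over $N$ finishes the proof.

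The only genuinely delicate point is the converse direction, where one has to resist the temptation to use $\prod (1 - \mu(E_n)) \leq \exp(-\sum \mu(E_n))$: that product formula requires \emph{mutual} independence, whereas the hypothesis here is only pairwise. The second-moment method is precisely the tool that extracts the conclusion from the weaker pairwise hypothesis, and identifying this as the right move is the main conceptual step in the argument.
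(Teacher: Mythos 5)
Your proof is correct. The paper states Lemma \ref{lem:bc} without proof, citing it as a classical result, so there is no argument of the author's to compare against; your write-up supplies a complete and standard one. Part (1) is the usual subadditivity-of-tails argument, and for part (2) you correctly identify that the pairwise-independence hypothesis (as opposed to mutual independence) forces the second-moment/Paley--Zygmund route rather than the product bound $\prod(1-\mu(E_n)) \leq \exp(-\sum \mu(E_n))$ --- this is exactly the Erd\H{o}s--R\'enyi refinement of the second Borel--Cantelli lemma, and your variance computation and the passage from $\mu(S_M>0) \to 1$ to $\mu(\bigcup_{n \geq N} E_n) = 1$ via continuity from below are both sound.
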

 
 To see how the lemma helps us solve a shrinking target problem, consider the following. If $T^n(x)$ lands in the target $B_{\phi(n)}(y)$, then $T^{-n}(B_{\phi(n)}(y))$ must contain $x$. See Figure \ref{fig:will-hit-targ}. 
 
 \begin{figure}[h]
  \centering
  \captionsetup{justification=centering}
  
  \def\svgwidth{\columnwidth}
    \includegraphics[trim=0 100 0 0,clip,width=0.41\textwidth]{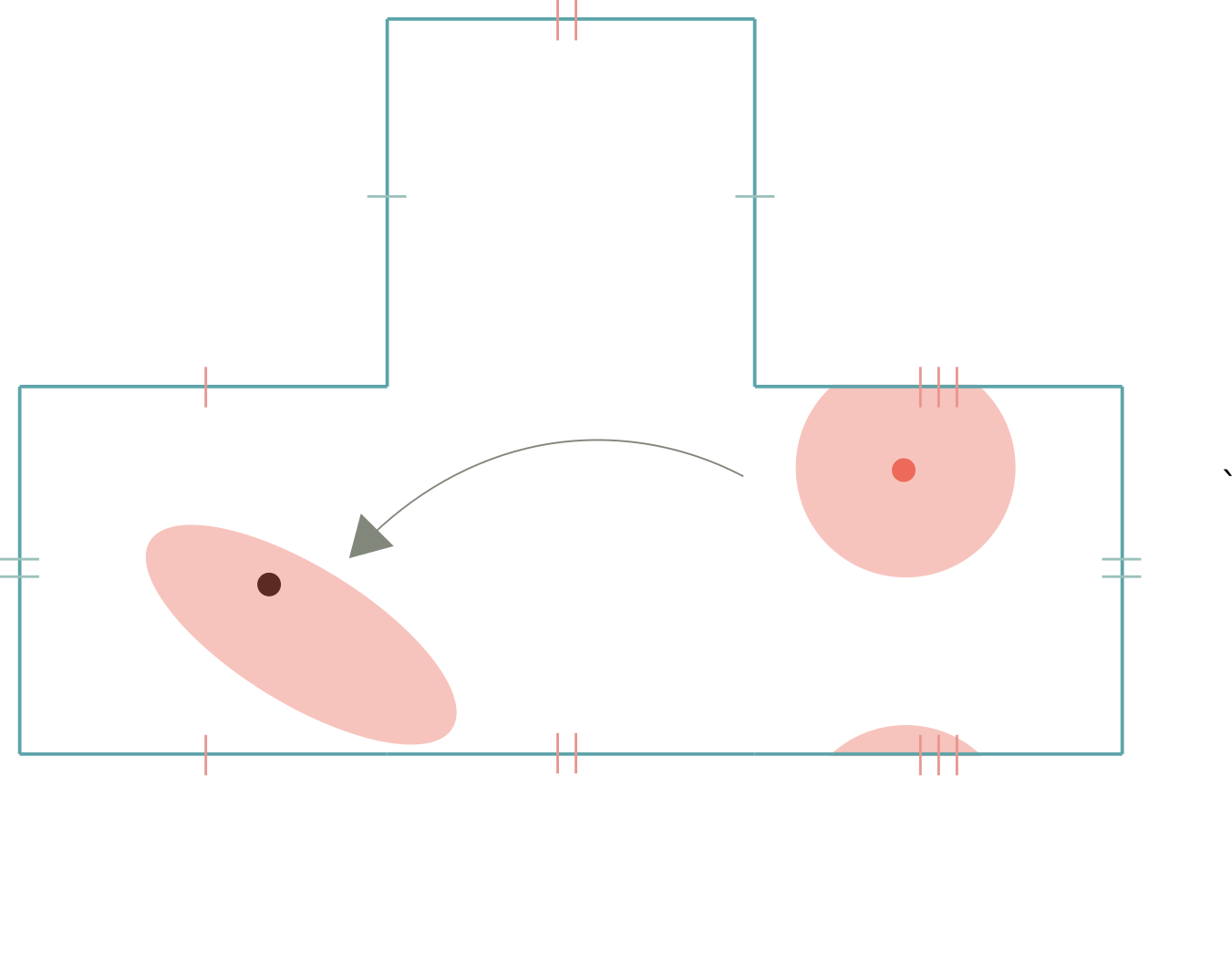}
  \caption{Will hit the target (Application of $T^{-1}$ to target)}
  \label{fig:will-hit-targ} 
 \end{figure}
 
 Now consider the following sum, assuming $T$ is measure-preserving: 
 
 \begin{equation*}
     \sum_{n=0}^{\infty} \mu(T^{-n}(B_{\phi(n)}(y))) = \sum_{n=0}^{\infty} \mu(B_{\phi(n)}(y)) \text{.}
 \end{equation*}
 
 \noindent The first part of Lemma \ref{lem:bc} tells us that if this sum converges, then the set of $x \in X$ such that $x \in T^{-n}(B_{\phi(n)}(y))$ infinitely often has measure zero. In other words, for almost every $x \in X$, there are at most \emph{finitely many} $n \in \N$ such that $T^{n}(x) \in B_{\phi(n)}(y)$. 
 
 Similarly, if the sum above diverges, \emph{and} we have that the sets $B_{\phi(n)}(y)$ are pairwise independent, then we can conclude that the set of points $x \in X$ such that $x \in T^{-n}(B_{\phi(n)}(y))$ infinitely often has full measure. In other words, for almost every $x \in X$, $T^{n}(x) \in B_{\phi(n)}(y)$ for \emph{infinitely many} $n \in \N$. 
 
 By observing convergence or divergence of this sum, we can determine how fast $\phi(n)$ can decrease, or rather, how fast we can shrink the target. But, there is a catch. We often cannot say much regarding \emph{pairwise independence} of the sets. In fact, this property is often absent, which is what makes a shrinking problem both interesting and challenging. We must look for a way to replace this hypothesis. 
 
 \subsection{A Brief History}\label{subsec:st-history}
 
 In 1966, Philipp used the Borel-Cantelli lemma in order to prove certain Diophantine estimates. He did this by formulating a quantitative version of the Borel-Cantelli lemma and used it to show that not only does the $2x$-map on the circle exhibit a shrinking target property, but so does the continued fraction map and the $\theta$-adic expansion map~\cite{Phi67}.
 
 \begin{theorem}[Quantitative Borel-Cantelli lemma]
 Let $E_n$ be a sequence of measurable sets in an arbitrary probability space $(X, \mu)$.  Denote $A(N,x)$ the number of integers $n \leq N$ such that $x \in E_n$.  Define 
 
 \begin{equation*}
 \phi(N) = \sum_{n \leq N} \mu(E_n)
 \end{equation*}
 
 \noindent Suppose that there exists a convergent series $\sum C_k$ with $C_k \geq 0$ such that for all integers $n > m$ we have 
 
 \begin{equation*}
 \mu(E_n \cap E_m) \leq \mu(E_n)\mu(E_m) + \mu(E_n)C_{n-m} \text{.}
 \end{equation*} 
 
 \noindent Then 
 
 \begin{equation*}
 A(N,x) = \phi(N) + O(\phi^{\frac{1}{2}}(N) \, \mathrm{log}^{\frac{3}{2} + \epsilon}(\phi(N)))
 \end{equation*}
 
 \noindent for any $\epsilon > 0 $, for almost every $x \in X$.
 \end{theorem}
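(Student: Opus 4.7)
The plan is to treat $A(N,x)$ as a sum of indicator random variables and pass from an $L^2$ variance bound to an almost-sure asymptotic via a Gál-Koksma style maximal argument.

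First I would write $A(N,x) = \sum_{n \leq N} \mathbf{1}_{E_n}(x)$ and consider the centered sum $S(M,N;x) := A(N,x) - A(M,x) - (\phi(N) - \phi(M))$ for $0 \leq M < N$. Expanding the square and integrating, the cross terms give
\begin{equation*}
   \int S(M,N;x)^2 \, d\mu(x) = \sum_{M < m,n \leq N} \bigl(\mu(E_m \cap E_n) - \mu(E_m)\mu(E_n)\bigr).
\end{equation*}
The diagonal terms contribute at most $\phi(N) - \phi(M)$. For the off-diagonal terms with $n > m$, the hypothesis gives $\mu(E_n \cap E_m) - \mu(E_n)\mu(E_m) \leq \mu(E_n) C_{n-m}$, so summing over $m$ first and using convergence of $\sum_k C_k =: C^*$ yields
\begin{equation*}
   \sum_{M < m < n \leq N} \mu(E_n) C_{n-m} \leq C^* \bigl(\phi(N) - \phi(M)\bigr).
\end{equation*}
Combining, I get the key variance estimate $\|S(M,N;\cdot)\|_2^2 \leq (2C^* + 1)\bigl(\phi(N) - \phi(M)\bigr)$; in particular with $M = 0$, $\|A(N,\cdot) - \phi(N)\|_2^2 = O(\phi(N))$.

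Next I would upgrade this $L^2$ bound to a pointwise almost-sure bound by the standard dyadic / Gál-Koksma argument. Choose an increasing sequence $N_k$ so that $\phi(N_k) \approx \rho^k$ for some $\rho > 1$; by Chebyshev and Borel-Cantelli, $A(N_k,x) - \phi(N_k) = O\bigl(\phi(N_k)^{1/2} \log^{1/2 + \epsilon}(\phi(N_k))\bigr)$ for almost every $x$. For general $N \in [N_k, N_{k+1}]$, the discrepancy from $N_k$ is controlled by a maximal inequality: using the quasi-orthogonality $\|S(M,N;\cdot)\|_2^2 \leq C(\phi(N) - \phi(M))$, Rademacher-Menshov (or its Gál-Koksma variant) gives
\begin{equation*}
   \Bigl\|\max_{N_k \leq N \leq N_{k+1}} |S(N_k, N; \cdot)|\Bigr\|_2^2 \leq C \bigl(\phi(N_{k+1}) - \phi(N_k)\bigr) \log^2(N_{k+1} - N_k),
\end{equation*}
and Borel-Cantelli on an $\epsilon$-slack applied termwise produces an extra $\log^{1+\epsilon}$ factor, giving overall the claimed error $O\bigl(\phi(N)^{1/2} \log^{3/2 + \epsilon}(\phi(N))\bigr)$ uniformly in $N$ between $N_k$ and $N_{k+1}$.

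The main obstacle is the passage from the variance bound on a sparse sequence $\{N_k\}$ to a bound valid for every $N$: a naive Chebyshev along all $N$ is not summable, so one must invoke a maximal-inequality argument with the correct Rademacher-Menshov dyadic decomposition. The power $3/2 + \epsilon$ of the logarithm (rather than $1/2 + \epsilon$) arises precisely from combining one $\log^{1/2+\epsilon}$ from Borel-Cantelli on the sparse subsequence with the $\log$ from the Rademacher-Menshov maximal bound. Once this machinery is in place, the computation of the variance via the hypothesis on $\mu(E_n \cap E_m)$ is the only place where the structure of the $E_n$ enters, and the rest is a general principle for quasi-independent sequences.
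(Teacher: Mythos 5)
The paper itself offers no proof of this theorem: it is quoted as historical background and attributed to Philipp~\cite{Phi67}, so there is nothing internal to compare against. Your outline is essentially Philipp's own route. The covariance hypothesis is used exactly to establish the quasi-orthogonality bound $\int |S(M,N)|^2\,d\mu \leq (2C^*+1)(\phi(N)-\phi(M))$, and the almost-sure conclusion with exponent $\tfrac32+\epsilon$ is then precisely the conclusion of the G\'al--Koksma convergence theorem applied with weights $a_n=\mu(E_n)$. Your variance computation is correct (diagonal terms contribute at most $\phi(N)-\phi(M)$; the off-diagonal terms, after symmetrizing in $m$ and $n$, contribute at most $2C^*(\phi(N)-\phi(M))$), and your bookkeeping of where the $\log^{1/2+\epsilon}$ from the sparse subsequence and the extra $\log$ from the maximal inequality combine to give $\log^{3/2+\epsilon}$ is the right accounting.

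There is, however, one genuine gap in the chaining step as you have written it. The Rademacher--Menshov bound over $N\in[N_k,N_{k+1}]$ produces a factor $\log^2(N_{k+1}-N_k)$, a logarithm of the \emph{number of indices}, whereas the asserted error term is measured in $\log\phi(N)$. Nothing in the hypotheses prevents $\phi$ from growing very slowly (e.g.\ $\mu(E_n)=1/n$, so $\phi(N)\asymp\log N$), in which case $N_{k+1}-N_k$ is superexponential in $k$ and $\log^2(N_{k+1}-N_k)$ dwarfs every power of $\log\phi(N_k)$; the naive dyadic count does not close. The standard repair, which is the actual content of the G\'al--Koksma/Philipp argument, exploits that both $A(\cdot,x)$ and $\phi(\cdot)$ are nondecreasing in $N$: one first coarsens to a grid of indices at which $\phi$ increases by a fixed amount, controls $S(0,N)$ between consecutive grid points by monotonicity up to an $O(1)$ additive error in $\phi$, and only then runs the dyadic maximal argument, so that the number of dyadic levels is governed by increments of $\phi$ rather than of $N$. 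With that adjustment your proof is complete; without it the claimed $\log^{3/2+\epsilon}(\phi(N))$ does not follow from the chaining as described.
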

 
 Notice how this quantitative version gives an error estimate so that we can understand just how ``far" from pairwise independence the sequence of measurable sets is allowed to be. 
 
 In 1982, Sullivan used a similar idea to prove a logarithm law that describes the cusp excursions of generic geodesics on noncompact, finite volume hyperbolic spaces~\cite{Sul82}. Sullivan constructed a quantitative Borel-Cantelli lemma by replacing the pairwise independence condition with a geometric condition imposed on shrinking sets in the cusps. In 1995, Hill and Velani coined the term "shrinking target" in their fundamental work on the subject~\cite{HiVe95}. Their work begins with an elegant description of the set-up (which we have expanded to include other formulations of shrinking target questions above), then they study the Hausdorff dimensions of the sets of points that hit a target (a Julia set) infinitely often for certain expanding rational maps on the Riemann sphere. Hill and Velani have also studied an analogous shrinking target problem corresponding to $\Z$-actions of affine linear (not necessarily measure preserving) maps on tori~\cite{HiVe95}.  
 
 Kleinbock and Margulis used a Borel-Cantelli argument to prove a far-reaching result that generalizes Sullivan's logarithm law to noncompact, finite volume locally symmetric spaces~\cite{KlMa99}. They replace the pairwise independence condition with exponential decay of correlations of smooth functions on the space. Athreya and Margulis proved that unipotent flows satisfy an analogous logarithm law, using probabilistic methods, techniques from the geometry of numbers, and the exponential decay of correlations of smooth functions on the space~\cite{AtMa09},~\cite{AtMa17}. 
 
 Following the literature on shrinking targets, we give the following definition.   
 
 \begin{definition}[Borel-Cantelli~\cite{At09},~\cite{Fa06}]\label{def:bc}
 Let $G$ be a group acting by measure-preserving transformations on a probability space $(X, \mathscr{B},\mu)$ and let $\Gamma$ be a subgroup. We say that a sequence of measurable sets $\{E_g\}_{g \in \Gamma}$ is \emph{Borel-Cantelli, (BC),} if $\sum_{g \in \Gamma} \mu(E_g) = \infty$ and 
 
 \begin{equation*}
 \mu(\{ x \in X: gx \in E_g \text{ infinitely often} \}) = 1 \text{.}
 \end{equation*}
 \end{definition}
 
 Theorem \ref{thm:3} and Theorem \ref{thm:4} in Section \ref{sec:3-STS} identify conditions for our target sets to be BC. 
 
 For the interested reader, Athreya has provided an expository article on the relationship between shrinking targets, logarithm laws, and Diophantine estimates. The article also speaks to how shrinking target properties can manifest in the various types of dynamical systems~\cite{At09}.


 \section{Application to Square-tiled Surfaces}\label{sec:3-STS}

 The goal of this section is to prove Theorem \ref{thm:1}. We begin by reviewing known properties of the Veech group of square-tiled surfaces, and then introduce a representation of the Veech group of the torus. The main contribution is in Section \ref{ssec:cover}: we can use a geometric (covering) argument to yield spectral estimates for the representation of the Veech group of a square-tiled surface that can be used to run a Borel-Cantelli argument. Further, we will show that the results extend to parallelogram-tiled surfaces. 

 \subsection{Properties of the Veech group}
 
 Let $(X,\omega)$ be a lattice surface, and $SL(X,\omega)$ its Veech group. $SL(X,\omega)$ is a non-cocompact lattice subgroup of $SL_2(\R)$, which implies the following: the group contains a hyperbolic element, hence the action of the Veech group on $(X,\omega)$ is ergodic. For the interested reader, proofs of these statements can be found or constructed from the following sources:~\cite{HubSch04},~\cite{K92}.
 
 We will use the following fact, one direction of which was originally proven by Veech~\cite{Vee89}. The equivalence was proven by Gutkin and Judge~\cite{GuJu96},~\cite{GuJu00}.

 Recall that we say two subgroups $\Gamma_1$ and $\Gamma_2$ of $SL_2(\R)$ are \emph{commensurate} if $\Gamma_1 \cap \Gamma_2$ has finite index in both $\Gamma_1$ and $\Gamma_2$. We say that two subgroups $\Gamma_1$ and $\Gamma_2$ of $SL_2(\R)$ are \emph{commensurable} if $\Gamma_1$ is commensurate to a conjugate of $\Gamma_2$. 
 
 \begin{theorem}\label{thm:GJ}
 $(X,\omega)$ is a square-tiled surface if and only if $SL(X,\omega)$ is commensurate with $SL_2(\Z)$. Similarly, $(X,\omega)$ is a parallelogram-tiled surface if and only if $SL(X,\omega)$ is commensurable with $SL_2(\Z)$. 
 \end{theorem}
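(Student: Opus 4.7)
The plan is to prove each direction of Theorem~\ref{thm:GJ} separately, treating first the square-tiled case and then reducing the parallelogram-tiled case to it via an $SL_2(\R)$ change of coordinates.

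For the forward direction (square-tiled implies commensurate with $SL_2(\Z)$), I would exploit the branched cover $\pi: X \to \T^2$. Since $\omega = \pi^*(dz)$, the absolute period lattice of $\omega$ is a finite-index sublattice of $\Z^2$, so any derivative $A \in SL(X,\omega)$ must preserve this sublattice, forcing $SL(X,\omega) \subseteq SL_2(\Z)$ up to conjugation by a fixed element normalizing between the two lattices. For the finite index inclusion, I would use the combinatorial description $(N,\sigma,\tau)$: the modular group $SL_2(\Z)$ acts on the finite set of isomorphism classes of degree-$N$ origamis by transforming the permutation pair $(\sigma,\tau)$ up to simultaneous conjugation by $S_N$, and an element $A\in SL_2(\Z)$ stabilizing the class of $(X,\omega)$ under this action lifts to an affine diffeomorphism of $X$ with derivative $A$, hence lies in $SL(X,\omega)$. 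Since the action is on a finite set, the stabilizer has finite index.

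For the reverse direction (commensurate implies square-tiled), assume $SL(X,\omega)\cap SL_2(\Z)$ has finite index in both factors. Then this intersection contains a principal congruence subgroup $\Gamma(N)$, and in particular the two parabolics $\bigl(\begin{smallmatrix}1 & N \\ 0 & 1\end{smallmatrix}\bigr)$ and $\bigl(\begin{smallmatrix}1 & 0 \\ N & 1\end{smallmatrix}\bigr)$. By standard Thurston--Veech theory, a parabolic element of the Veech group fixing a given direction is equivalent to a complete cylinder decomposition of $(X,\omega)$ in that direction with cylinder moduli all commensurable to a common rational. Obtaining such decompositions in both coordinate directions, I would argue that intersecting horizontal and vertical saddle connections yields a tiling of $X$ by rectangles whose horizontal and vertical side lengths all lie in $\Q\cdot\ell_h$ and $\Q\cdot\ell_v$ for fixed $\ell_h, \ell_v > 0$. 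A common rescaling to clear denominators turns these into unit squares, and the tautological map sending each such square to $\T^2$ supplies the branched cover required to exhibit $(X,\omega)$ as square-tiled.

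The main obstacle is the reverse direction, specifically ensuring that the two transverse cylinder decompositions assemble compatibly into a single rectangular tiling with rationally related side lengths. The subtlety is that the horizontal partition points of each horizontal cylinder (coming from zeros of $\omega$ and cylinder boundaries) must coincide with the vertical tiling lines induced by the vertical decomposition; this follows from the finiteness of the zero set and the fact that all relevant lengths lie in a finitely generated $\Q$-module generated by the two commensurable families of moduli, which is the technical heart of the Gutkin--Judge argument. Finally, the parallelogram-tiled statement follows from the square-tiled statement by choosing $g\in SL_2(\R)$ that carries the parallelogram period lattice to $\Z^2$: since Veech groups transform by $SL(g\cdot(X,\omega)) = g\,SL(X,\omega)\,g^{-1}$, commensurability of $SL(X,\omega)$ with $SL_2(\Z)$ (i.e., commensurateness with a conjugate) is equivalent to commensurateness of $SL(g\cdot(X,\omega))$ with $SL_2(\Z)$, reducing the claim to the square-tiled case already established.
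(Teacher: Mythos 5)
First, a framing point: the paper does not prove this statement at all --- it is quoted as a known result, with one direction attributed to Veech \cite{Vee89} and the full equivalence to Gutkin and Judge \cite{GuJu96}, \cite{GuJu00} --- so there is no in-paper argument to measure yours against. On its own terms, your sketch follows the standard Thurston--Veech/Gutkin--Judge strategy. The forward direction (finite index via the $SL_2(\Z)$-action on the finite set of degree-$N$ permutation pairs $(\sigma,\tau)$ up to simultaneous conjugation) is essentially the accepted argument for reduced origamis, though you should say explicitly that the element carrying the period lattice to $\Z^2$ lies in $GL_2(\Q)$: commensurateness (as opposed to mere commensurability) with $SL_2(\Z)$ survives that conjugation precisely because $GL_2(\Q)$ commensurates $SL_2(\Z)$, and without that remark the non-reduced case is not covered.

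The reverse direction has two genuine problems. The first is an outright error: a finite-index subgroup of $SL_2(\Z)$ need \emph{not} contain a principal congruence subgroup $\Gamma(N)$, since $SL_2(\Z)$ fails the congruence subgroup property. What you need is weaker and true: the intersection meets each of the cyclic groups generated by $\bigl(\begin{smallmatrix}1&1\\0&1\end{smallmatrix}\bigr)$ and $\bigl(\begin{smallmatrix}1&0\\1&1\end{smallmatrix}\bigr)$ in finite index, hence contains $\bigl(\begin{smallmatrix}1&k\\0&1\end{smallmatrix}\bigr)$ and $\bigl(\begin{smallmatrix}1&0\\m&1\end{smallmatrix}\bigr)$ for some $k,m\geq 1$; replace the congruence-subgroup step with that. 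The second problem is that the step you yourself flag as ``the technical heart'' --- assembling the transverse cylinder decompositions into a single rectangular tiling with rationally related side data, equivalently showing that the absolute period group of $\omega$ is a lattice in $\C$ --- is exactly the substance of the Gutkin--Judge theorem and is not actually carried out, so as written the reverse direction is a plan rather than a proof. The honest options are to cite \cite{GuJu00} as the paper does, or to supply that missing argument in full.
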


 We will only need one direction of this statement (the one observed by Veech): a square-tiled surface has a Veech group that is commensurate with $SL_2(\Z)$.

 \subsection{Spectral estimates of the Koopman representation}
 
 In this section, we provide background for Theorem \ref{thm:spectral} below. Theorem \ref{thm:spectral} gives a description of the exponential decay of averages for the action of any convex cocompact subgroup of the Veech group $SL_2(\Z)$ on a torus. It was proven by Finkelshtein in~\cite{Fin16}. 
 
 Let $(X,\mathscr{B},\mu)$ be a probability space, let $\mathcal{U}(L^2(X,\mu))$ be the space of unitary operators, and let $\Gamma$ be a group acting by measure preserving transformations. The \emph{Koopman representation} is the representation $\pi: \Gamma \to \mathcal{U}(L^2(X,\mu))$ defined by $\pi(g)f(x) = f(g^{-1}x)$. Since constant functions are invariant, we will consider the projection $L^2(X,\mu) \to L_0^2(X,\mu)$ where $L_0^2(X,\mu)$ is the closed subspace of functions orthogonal to the constant functions. We denote by $\pi_0$ the representation $\pi_0: \Gamma \to U(L_0^2(X,\mu))$. 
 
 Consider the action of a convex cocompact subroup of $SL_2(\Z)$ on the hyperbolic plane (by M\"obius transformations) and recall the definition of the critical exponent, Definition \ref{critexp}: 
 
 \begin{equation*}
 \delta_{\Gamma} := \limsup_{R \rightarrow \infty} \frac{\mathrm{log}(\#\{g \in \Gamma: d_{\Hh}(g.x_0,x_0) \leq R \})}{R} \text{.}
 \end{equation*}

 
 Patterson showed that the critical exponent for a finitely generated Fuchsian group $\Gamma$ is the Hausdorff dimension of the limit set, $\Lambda = \overline{\Gamma x} \cap S^1$, where $S^1$ is the circle at infinity~\cite{Pat76}. Sullivan showed that the critical exponent of any Fuchsian group is the Hausdorff dimension of the \emph{radial} limit set, $\Lambda_r \subset \Lambda$, which consists of all points in the limit set such that there exists a sequence $\lambda_n x \rightarrow y$ remaining within a bounded distance of a geodesic ray ending at $y$~\cite{Sul79}. Convex cocompact Fuchsian groups are precisely those groups whose limit set \emph{is} the radial limit set. Sullivan studied the radial limit set using the ``density at infinity" (the Patterson-Sullivan measure class) and used this to equate the logarithmic growth rate of the number of orbit points in a ball of radius $R$ of any convex cocompact Fuchsian group acting on $\Hh$ to the Hausdorff dimension of the radial limit set.  The Patterson-Sullivan measure class is a measure class on the boundary of hyperbolic space. The measures themselves are not invariant under the action of the convex cocompact subgroup. Rather, the measure class is invariant. For background on the Patterson-Sullivan measure class and quasiconformal measures, see the survey~\cite{Q}.
 
 In~\cite{C93}, Coornaert generalized the work of Sullivan to the action of a discrete group of isometries on a hyperbolic geodesic metric space by extending the notion of quasiconformal measures to this setting. For background on Gromov hyperbolic spaces, the reader is encouraged to visit~\cite{BH99}. Let $\Gamma$ be a convex cocompact subgroup of $SL_2(\Z)$. If we pick a basepoint $z_0 \in \Hh$, we can define $d_{\Gamma}(g_1, g_2) = d_{\Hh}(g_1 z_0, g_2 z_0)$, and $(\Gamma, d_{\Gamma})$ is an example of a Gromov hyperbolic space. However, $(\Gamma, d_{\Gamma})$ is not a \emph{geodesic} metric space. This poses a problem in applying Coornaert's extension of Patterson-Sullivan theory to the action of $\Gamma$ on $(\Gamma, d_{\Gamma})$, or to the induced action on the Gromov boundary of the group. 
 
 Blach\`ere, Ha\"issinsky, and Mathieu solve this issue by proposing the following coarse characterization of hyperbolicity in~\cite{BHM11}. In short, they define a quasiruled hyperbolic metric space by equipping a  the space with a visual quasiruling structure. With the assumption additional assumption that the metric space is proper, they generalize the Patterson-Sullivan measure class to the non-geodesic setting. In what follows, by quasiconformal measures, we mean a measure in this measure class on the visual boundary (defined below). See Section \S 2 in~\cite{BHM11} for details. An example of a hyperbolic group with a visual quasiruling structure is a convex cocompact $\Gamma \subset \mathrm{Isom}(\Hh)$ with the metric $d_{\Gamma}$.  
 
 In the remainder of this section, we will specialize to $(\Gamma, d_{\Gamma})$, introduce the visual boundary $\partial \Gamma$ of $\Gamma$, and then state the lemma of the shadow according to Blach\`ere, Ha\"issinsky, and Mathieu. As a consequence of the lemma of the shadow, we will define metric balls in $(\Gamma, d_{\Gamma})$ that satisfy certain asymptotics as the radius of the ball increases. Moreover, we will observe that there is a bound on the number of shadows that can cover any element of the boundary provided we pick our shadows carefully. Lastly, we will state the theorem Finkelshtein proved. We encourage the interested reader to consult~\cite{BHM11} and~\cite{Fin16} for details. 

 Let $(\Gamma, d_{\Gamma})$ be as above. 

 \begin{definition}[Gromov product]
 Let $g_0, g, h \in \Gamma$. The Gromov product is $$\left(g,h\right)_{g_0} = \frac{1}{2}\left( d_{\Gamma}(g, g_0) + d_{\Gamma}(h,g_0) - d_{\Gamma}(g,h) \right)\text{.}$$
 \end{definition} 

 We say that a sequence $(g_i)_{i=1}^{\infty} \subset \Gamma$ is a Gromov sequence if $\left(g_i,g_j \right)_{g_0} \to \infty$ as $\min{i,j} \to \infty$. We say that two Gromov sequences $(g_n)$ and $(h_n)$ are equivalent if $\left(g_i , h_i \right)_{g_0} \to \infty$ as $i \to \infty$. We denote an equivalence class of Gromov sequences by $[g_i]$. The visual boundary $\partial \Gamma$ of $(\Gamma, d_{\Gamma}, g_0)$ is the set of equivalence classes of Gromov sequences: $$\partial \Gamma = \left\{ (g_i)_{i=1}^{\infty} : g_i \in \Gamma \text{ and } \lim_{i,j} (g_i, g_j)_{g_0} = \infty \right\} \Big/ \sim \text{.}$$ 

 Moreover, as introduced in~\cite{Sul79} and extended in~\cite{C93}, we have a notion of shadows. Note that some authors define shadows inclusive of elements in $\Gamma$. The shadows we need only include elements in the visual boundary $\partial \Gamma$. Given an element $g \in \Gamma$ and a number $R>0$, the shadow $S_{g_0}(g, R)$ is $$S_{g_0}(g, R) := \left\{ [g_i] \in \partial \Gamma : \liminf_{j \to \infty} (g, g_i)_{g_0} > d_{\Gamma}(g_0, g) - R\right\}\text{.}$$

 Sullivan made a fundamental observation about the measure of the shadows, see \S 2 in~\cite{Sul79}, now called the lemma of the shadow. Coornaert generalized the lemma of the shadow to the action of a discrete group of isometries acting on a hyperbolic geodesic metric space~\cite{C93}, and subsequently Blach\`ere, Ha\"issinsky, and Mathieu generalized this lemma to the setting we are considering~\cite{BHM11}.

 \begin{lemma}[Lemma of the shadow~\cite{BHM11}]\label{lem:shadow}
 Let $\rho$ be a quasiconformal measure based at $g_0$ with respect to the metric $d_{\Gamma}$ on $\Gamma$. There exists $R_0 \geq 0$ such that if $R> R_0$, for any $g \in \Gamma$

 \begin{equation*}
 \rho(S_{g_0}(g,R)) = e^{-\delta_{\Gamma}d_{\Gamma}(g,g_0) + O(1)}\text{.}
 \end{equation*}
 \end{lemma}

 A key consequence of the lemma of the shadow is that we can count orbits in an expanding ball. We record Coornaert's observation about the asymptotics of such a group here. 

 \begin{lemma}\label{lem:shells}\cite{C93}
 Let $\Gamma \subset \mathrm{Isom}(\Hh)$ be convex cocompact and fix a basepoint $z_0 \in \Hh$. Let $\delta_{\Gamma}$ denote the critical exponent of $\Gamma$. Define $B_{n} \subset \Gamma$ to be the set of elements such that $d_{\Hh}(gz_0,z_0) \leq n$, where $gz_0$ denotes the action by M\"obius transformation. Then we have the following asymptotics for the number of elements in $B_n$:
 
 \begin{equation*}
 \lvert B_n \rvert = e^{\delta_{\Gamma}n + O(1)}\text{.}
 \end{equation*}
  
 \noindent Consequently, if we let $S_{n,k} = B_n \setminus B_{n-k}$, then for fixed $k$, we have:

 \begin{equation*}
 \lvert S_{n,k} \rvert = e^{\delta_{\Gamma}n + O(1)}\text{.}
 \end{equation*} 
 \end{lemma}

 Furthermore, Coornaert proved the following lemma in the case of a discrete group of convex cocompact isometries acting on a geodesic metric space. The lemma was originally observed by Sullivan in the setting of convex cocompact groups. 
 
 \begin{lemma}\label{lem:shadow-L}\cite{C93} Let $\Gamma$ be a discrete group of isometries acting on a geodesic hyperbolic metric space $(X,d)$. There exists a $R_0 > 0$ and $k \geq 0$ such that for any $R> R_0$ and $n \in N$, $$\bigcup_{g \in S_{n,k}} S_{e}(g,R) \supset \partial X \text{.}$$ Moreover, there exists $L$ depending $R$ and $k$ such that for any $n$ and any $\xi \in \partial X$, $$\# \{g \in S_{n,k}: \xi \in S_{e}(g,R) \} \leq L \text{.}$$
 \end{lemma}

 Finkelshtein recognized that Lemma $\ref{lem:shadow-L}$ holds for the case when the hyperbolic metric space is $(\Gamma, d_{\Gamma})$. Using the work of Blach\`ere, Ha\"issinsky, and Mathieu (specifically, quasigeodesic rays), proofs of the above fact can be translated into the case where a non-elementary group $\Gamma$ acts properly discontinuously and cocompactly by isometries on a proper quasiruled hyperbolic space (such as $\Gamma$ acting on $(\Gamma, d_{\Gamma})$).
 
 Moreover, Finkelshtein observed that one can fix the $k$ in $S_{n,k}$, for all $n$, such that we have both a bound $L$ on the number of overlaps of shadows for all elements in $S_{n,k}$ and the asymptotics in Lemma \ref{lem:shells} hold as $n \to \infty$. He defines the \emph{shell} $S_n \subset \Gamma$ as the set $S_{n,k}$ such that $k$ has this property. Further, he picks $R$ sufficiently large so that all of the shadows he uses in his argument satisfy both the lemma of the shadow, Lemma \ref{lem:shadow}, and Lemma \ref{lem:shadow-L}. With this definition for the shells and by fixing the $R$ parameter in the shadows, Finkelshtein proved the following theorem by studying the induced action of $\Gamma$ on its boundary. The well-behaved harmonic analysis on the torus enabled Finkelshtein to pass from the boundary representation back to the Koopman representation on the torus:
 
 \begin{theorem}\label{thm:spectral}\cite{Fin16}
 Let $\T^2$ be a square torus, let $\Gamma \subset SL_2(\Z)$ be a convex cocompact subgroup with critical exponent $\delta_{\Gamma}$, and let $\pi_0$ denote the Koopman representation on $L_0^2(\T^2)$. Let $\mu_n$ be a uniform probability measure on $S_n \subset \Gamma$. Then
 
 \begin{equation*}
     \lvert \lvert \pi_0(\mu_n) \rvert \rvert \leq e^{-\frac{1}{2} \delta_{\Gamma}n + 2\log n + O(1)} \text{,}
 \end{equation*}
 
 \noindent where $\pi_0(\mu_n) = \sum_{g \in \supp{\mu_n}} \mu_n(g)\pi_0(g)$.
 \end{theorem}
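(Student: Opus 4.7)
The strategy is to reduce the spectral estimate for $\pi_0(\mu_n)$ on $L^2_0(\T^2)$ to a spectral estimate for the left regular representation $\lambda$ of $\Gamma$ on $\ell^2(\Gamma)$, and then invoke a Haagerup-type inequality for convex cocompact Fuchsian subgroups.

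First, I would use the Fourier basis $\{e_v(x) = e^{2\pi i \langle v, x\rangle}\}_{v \in \Z^2}$ of $L^2(\T^2)$. The subspace $L^2_0$ is spanned by $\{e_v : v \neq 0\}$, and as computed in the Introduction one has $\pi_0(g) e_v = e_{(g^{-1})^T v}$. Thus $\pi_0$ is unitarily equivalent to the permutation representation of $\Gamma$ on $\ell^2(\Z^2 \setminus \{0\})$ under the inverse-transpose action. Decompose $\Z^2 \setminus \{0\}$ into $\Gamma$-orbits. Because $\Gamma$ is convex cocompact it contains no parabolic elements; but the $SL_2(\R)$-stabilizer of any nonzero vector is a unipotent subgroup, so $\mathrm{Stab}_\Gamma(v) = \{e\}$ for every $v \neq 0$. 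Each orbit summand is therefore unitarily equivalent to the left regular representation $\lambda$ on $\ell^2(\Gamma)$, whence $\|\pi_0(\mu_n)\| = \|\lambda(\mu_n)\|$.

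The remaining task is to bound $\|\lambda(\mu_n)\|$. The plan is to apply a Haagerup-type bound for convex cocompact subgroups of $\mathrm{Isom}(\Hh)$: for $f \in \ell^2(\Gamma)$ supported on a hyperbolic shell of radius $\sim n$, one has $\|\lambda(f)\| \leq C(1+n)^2 \|f\|_2$. Applied to $f = \mu_n$ with $\|\mu_n\|_2 = |S_n|^{-1/2}$ and the count $|S_n| = e^{\delta_\Gamma n + O(1)}$ from Lemma \ref{lem:shells}, this yields
\[
\|\lambda(\mu_n)\| \leq C(1+n)^2 \cdot e^{-\delta_\Gamma n/2 + O(1)} = e^{-\delta_\Gamma n/2 + 2\log n + O(1)},
\]
as claimed.

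The main obstacle is establishing the Haagerup-type inequality above; this is where the critical exponent $\delta_\Gamma$ enters essentially. The standard approach is to pass to the boundary representation $\sigma : \Gamma \to U(L^2(\partial\Hh, \nu_{\mathrm{PS}}))$, where $\nu_{\mathrm{PS}}$ is the Patterson-Sullivan measure, and exploit a decay estimate of the form $|\langle \sigma(g)\xi, \eta\rangle| \lesssim \|\xi\|\|\eta\| (1 + d_{\Hh}(gx_0,x_0))^{1/2} e^{-\delta_\Gamma d_{\Hh}(gx_0, x_0)/2}$ on matrix coefficients against Poisson-type vectors. One then controls $\|\lambda(\mu_n)\|^2$ by a double sum over $S_n \times S_n$ of such matrix coefficients, using the shell-counting asymptotics of Lemma \ref{lem:shells} to sum efficiently at each pairwise distance $d(g,h) \leq 2n$. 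The polynomial $(1+n)^2$ correction arises from combining the $(1+d)^{1/2}$ factor in the matrix coefficients with the $e^{\delta_\Gamma d + O(1)}$ shell sizes, and it is precisely this combination that produces the $2\log n$ term in the statement.
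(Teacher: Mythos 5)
The paper does not actually prove this theorem: it is imported from \cite{Fin16}, with only the remark (just before the statement) that the proof uses an induced boundary representation. Your proposal is a correct reconstruction of that argument and follows the same route. The first step --- diagonalizing $\pi_0$ in the Fourier basis, observing that the stabilizer in $SL_2(\Z)$ of a nonzero integer vector is unipotent so that convex cocompactness (no parabolics) forces trivial stabilizers, and concluding that $\pi_0$ is a multiple of the left regular representation with $\lVert \pi_0(\mu_n)\rVert = \lVert \lambda(\mu_n)\rVert$ --- is exactly the reduction in \cite{Fin16} and is sound. The second step is also correct in substance: $\lVert \mu_n\rVert_2 = |S_n|^{-1/2} = e^{-\delta_\Gamma n/2 + O(1)}$ by Lemma~\ref{lem:shells}, and a Haagerup/property (RD) bound $\lVert \lambda(f)\rVert \le C(1+n)^2\lVert f\rVert_2$ applies because convex cocompact subgroups of $SL_2(\Z)$ are virtually free (hence hyperbolic) and the orbit map is a quasi-isometric embedding, so the hyperbolic shell $S_n$ lies in a word ball of radius $O(n)$; you use this metric comparison implicitly and it would be worth making explicit. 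The one piece you assert rather than prove is the Haagerup-type inequality itself; your sketch of it via Harish-Chandra-type decay of matrix coefficients of the Patterson--Sullivan boundary representation is precisely the mechanism the paper alludes to and the technical heart of \cite{Fin16}, but since property (RD) for hyperbolic groups is also a citable classical fact, this does not amount to a gap.
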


 
 This result was used to solve a similar shrinking target problem on a torus. In our set-up, the spectral estimates of convex cocompact subgroups of $SL_2(\Z)$ on the torus play the role of the pairwise independence assumption in the Borel-Cantelli lemma. 

 We require one more property of the shells for the Borel-Cantelli argument in subsection \ref{ssec:bc-argument}: 

 \begin{lemma}\label{lem:shell-norm-bound}
 With $S_n$ as above, and letting $\lvert \lvert \cdot \rvert \rvert$ denote the operator norm of $g$ as a linear transformation on $\R^2$, we have
 
 \begin{equation*}
    \max \{\lvert \lvert g \rvert \rvert: \, g \in S_{2n} \} \leq e^n \text{.}
 \end{equation*}
 
 \end{lemma}
 
 \begin{proof}
 Since $g$ is an isometry of $\Hh$ which lives in a convex cocompact subgroup, it must be either elliptic or hyperbolic. If elliptic, the operator norm is $1$ and we are finished. If hyperbolic, there is a $\delta<e^{2n}$ such that the translation distance of $g$ is $\delta$. Thus $g$ is conjugate in $SL_2(\R)$ to a matrix of the form $\begin{bmatrix} e^{\frac{\delta}{2}} & 0 \\ 0 & e^{-\frac{\delta}{2}} \end{bmatrix}$. It follows that the eigenvalues are $e^{\frac{\delta}{2}}$ and $e^{-\frac{\delta}{2}}$ and we conclude that $\lvert \lvert g \rvert \rvert$ is $e^{\frac{\delta}{2}} < e^{n}$. Since the largest eigenvalue of a hyperbolic matrix is the operator norm, we are done. 
 \end{proof}
 
 \subsection{A covering argument}\label{ssec:cover}

 \begin{lemma}\label{lem:equivariance}
 Let $(X,\omega)$ be a square-tiled surface and let $SL(X,\omega)$ be its Veech group. Then, there exist a finite index subgroup $\Gamma^{\prime} \subset SL(X,\omega)$ and a branched cover $q: X \to \T^2$ such that the cover is equivariant with respect to the action of any subgroup $\Gamma^{\prime}$. 
 \end{lemma}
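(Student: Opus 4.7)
The plan is to keep the branched cover $q := \pi : X \to \T^2$ from Definition \ref{sts} and extract a finite-index subgroup $\Gamma^\prime \subseteq SL(X,\omega)$ whose elements all lift affinely through $q$ to produce the equivariance. The argument combines Theorem \ref{thm:GJ} with a finite-orbit fact about the cover under the natural $SL_2(\Z)$-action on $\T^2$.

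First I would set $\Gamma_0 := SL(X,\omega) \cap SL_2(\Z)$, which is finite index in $SL(X,\omega)$ by Theorem \ref{thm:GJ}; every $g \in \Gamma_0$ acts on $\T^2 = \R^2/\Z^2$ as the usual linear automorphism, fixing the branch point $0$. Removing the branch locus makes $q$ an honest unbranched $N$-sheeted cover $X \setminus q^{-1}(0) \to \T^2 \setminus \{0\}$, classified by a conjugacy class $[H]$ of index-$N$ subgroups of the free group $\pi_1(\T^2 \setminus \{0\}) \cong F_2$. Since $F_2$ has only finitely many subgroups of any given finite index, and $SL_2(\Z)$ acts on conjugacy classes of index-$N$ subgroups via its induced action on $\pi_1$ (well defined up to inner automorphism, since $g$ fixes $0$ but not necessarily a basepoint), this action is on a finite set. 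The stabilizer $\Gamma_1$ of $[H]$ is therefore a finite-index subgroup of $SL_2(\Z)$, and I would take $\Gamma^\prime := \Gamma_0 \cap \Gamma_1$, which is finite index in $SL(X,\omega)$.

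For each $g \in \Gamma^\prime$, the composite $g \circ q : X \to \T^2$ has the same classifying subgroup as $q$, so standard covering-space theory produces a homeomorphism $\tilde\phi_g : X \to X$ with $q \circ \tilde\phi_g = g \circ q$. Because $g$ is affine on $\T^2$ and $q$ is a local isometry in the flat charts of $X$ and $\T^2$, $\tilde\phi_g$ is automatically affine with derivative $g$ and extends continuously over the branch points; this is the required equivariant lift, and equivariance for any subgroup of $\Gamma^\prime$ follows by restriction.

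The main obstacle is verifying that $\Gamma_1$ has finite index. The substance is that the cover $q$ is encoded by finite combinatorial data (for instance the pair $(\sigma,\tau) \in S_N \times S_N$ from the definition of a square-tiled surface), and the $SL_2(\Z)$-action on such data has finite orbits. Once this finiteness is in hand, everything else is bookkeeping: the lift $\tilde\phi_g$ is only unique up to a deck transformation of $q$, i.e.\ up to an element of the finite translation subgroup of $X$, but this indeterminacy does not affect the equivariance statement.
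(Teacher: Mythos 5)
Your proposal is correct, and it supplies a genuine argument where the paper offers only a one-line citation: the paper derives Lemma \ref{lem:equivariance} by invoking Theorem \ref{thm:GJ} together with the results of Gutkin--Judge, without spelling out the mechanism. What you do differently is make that mechanism explicit and essentially self-contained: after using Theorem \ref{thm:GJ} only to get $\Gamma_0 = SL(X,\omega)\cap SL_2(\Z)$ of finite index, you classify the unbranched cover $X\setminus q^{-1}(0)\to \T^2\setminus\{0\}$ by a conjugacy class of index-$N$ subgroups of $F_2$, use the finiteness of such classes to see that the stabilizer $\Gamma_1$ under the $\mathrm{Out}(F_2)$-action is of finite index in $SL_2(\Z)$, and then apply the lifting criterion to produce the affine lift $\tilde\phi_g$ with $q\circ\tilde\phi_g = g\circ q$. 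This is in fact the standard combinatorial argument (equivalently, the $SL_2(\Z)$-action on the finite set of degree-$N$ origami data $(\sigma,\tau)$ up to simultaneous conjugacy) that underlies the Gutkin--Judge results the paper cites, so your route buys a proof one can read without chasing references, and as a bonus it shows $\Gamma_1\subseteq SL(X,\omega)$ automatically, since the lift is itself an affine automorphism with derivative $g$. Two small points are worth keeping in view, both of which you essentially flag: the induced map on $\pi_1(\T^2\setminus\{0\})$ is only an outer automorphism (which suffices, since you act on conjugacy classes), and the ``action'' of $g$ on $X$ in the equivariance statement must be interpreted as the chosen affine lift $\tilde\phi_g$, well defined up to the deck group of $q$; an arbitrary affine automorphism with derivative $g$ need not be a lift if the translation automorphism group of $(X,\omega)$ strictly contains $\mathrm{Deck}(q)$. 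The paper glosses over this same ambiguity, so it is not a defect of your argument relative to the source.
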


 This follows from Theorem \ref{thm:GJ}, and the work in~\cite{GuJu00}. A similar statement holds for parallelogram-tiled surfaces, but we need to compose a cover of a (non-square) torus with an affine map to a square torus. 
 
 Although irrelevant to our current goals, it is worth noting that we know precisely which square-tiled surfaces (and parallelogram-tiled surfaces) for which the full Veech group descends to an action on the square torus (or to an action conjugate to an action on the square torus). Recall that the \emph{saddle connections} of a translation surface $(Y,\nu)$ are straight line trajectories that start at a singular point and end at a singular point, passing through no singular points in-between. The \emph{holonomy vectors} are the values we get when we integrate the saddle connections over the holomorphic one-form $\nu$. The \emph{period lattice} is the the lattice generated by the holonomy vectors.
 
 For a square-tiled surface $(X,\omega)$ (tiled by unit squares), the holonomy vectors must be a subset of $\Z \oplus i\Z$. We say a square-tiled surface is \emph{reduced} if the period lattice is $\Z \oplus i\Z$. Since the action of the Veech group preserves the period lattice, we see that the Veech group must be a subgroup of $SL_2(\Z)$, and further, that for any $g \in SL(X,\omega)$, the following diagram commutes
 
 \begin{equation*}
    \begin{tikzcd}
        X \ar[r, "g"] \ar[d, "q"] & X \ar[d, "q"]\\
        \T^2  \ar[r, "g"]  & \T^2 \\
    \end{tikzcd}
 \end{equation*}
 
 \noindent where $q(x) = \int_p^x \omega \mod \Z \oplus i\Z$. (Note that this map respects the choice of north on the square-tiled surface.)
 
 If the period lattice is not $\Z \oplus i\Z$, then there will be a parabolic element in the Veech group with non-integral entries. Such an element cannot descend with respect to the cover.  
 
 There is a similar picture for parallelogram-tiled surfaces, where the tiling parallelogram has sides $a,b \in \C$ and unit area. Let $P$ denote the translation surface given by identifying opposite sides of the parallelogram. Then, we say that $(X,\omega)$ is a \emph{reduced} parallelogram-tiled surface if the period lattice is $a\Z \oplus b\Z$. For a reduced parallelogram-tiled surface, the following diagram commutes.  
 
  \begin{equation*}
    \begin{tikzcd}
        X \ar[r, "g"] \ar[d, "q"] & X \ar[d, "q"]\\
        P  \ar[r, "g"] \ar[d, "h"]  & P \ar[d, "h"]\\
        \T^2  & \T^2 \\
    \end{tikzcd}
 \end{equation*}
 
 \noindent where $q(x) = \int_p^x \omega \mod \Z[a] \oplus \Z[b]$, $h \in SL_2(\R)$, and as above, the choice of north on the translation surfaces is respected with the exception of the action of $h$. 

 We now turn our attention to functions on the space. Let $(X,\omega)$ be a square-tiled surface with probability measure $\nu$ and let $q: X \to \T^2$ be the branched covering map. Then 
 
 \begin{equation*}
     L^2(X, \nu) \cong H \oplus H^{\perp} 
 \end{equation*}
 
 \noindent where $H$ is the pullback of $L^2(\T^2)$.
 
 \begin{lemma}\label{lem:contravariance}
 Let $q^*: L^2(\T^2) \rightarrow H$ denote the pullback. There exists a finite index subgroup $\Gamma^{\prime} \subset SL(X,\omega)$ such that for every $g \in \Gamma^{\prime}$, the following diagram commutes.
 
  \begin{equation*}
    \begin{tikzcd}
        H \ar[r, "\pi_H(g)"] & H \\
        \mathrm{L^2}(\T^2) \ar[u, "q^*"] \ar[r, "\pi(g)"]  & \mathrm{L^2}(\T^2) \ar[u, "q^*"] \\
    \end{tikzcd}
 \end{equation*}
 
 \noindent where $\pi_H: \Gamma^{\prime} \to U(H)$ is the Koopman representation of $SL(X,\omega)$ on $H$, and $\pi: \Gamma^{\prime} \to U(L^2(\T^2))$ is the Koopman representation of $\Gamma^{\prime}$ on $L^2(\T^2)$.
 
 \end{lemma}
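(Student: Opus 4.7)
The plan is to invoke Lemma \ref{lem:equivariance} and verify the intertwining by unwinding definitions. Take $\Gamma^{\prime}$ to be the finite-index subgroup of $SL(X,\omega)$ and $p \colon X \to \T^2$ the equivariant branched cover provided by that lemma (the $q$ of its notation), so that
\begin{equation*}
p(g \cdot x) = g \cdot p(x) \qquad \text{for every } g \in \Gamma^{\prime} \text{ and a.e.\ } x \in X,
\end{equation*}
where the left-hand $g$ denotes the affine action on $X$ and the right-hand $g$ denotes the induced linear action on $\T^2$. The content of the lemma then reduces to the formal statement that pullback by a $\Gamma^{\prime}$-equivariant map intertwines the corresponding Koopman representations.

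The central calculation is: for $f \in L^2(\T^2)$, $g \in \Gamma^{\prime}$, and a.e.\ $x \in X$,
\begin{align*}
(\pi_H(g) \circ p^*)(f)(x) &= (p^* f)(g^{-1} x) = f\bigl(p(g^{-1} x)\bigr) \\
&= f\bigl(g^{-1} p(x)\bigr) = (\pi(g) f)(p(x)) = (p^* \circ \pi(g))(f)(x),
\end{align*}
where the equality passing between the two lines uses the equivariance of $p$. This establishes $\pi_H(g) \circ p^* = p^* \circ \pi(g)$ on a dense subspace of $L^2(\T^2)$, and hence everywhere by the boundedness of $p^*$ and the unitarity of $\pi(g)$ and $\pi_H(g)$.

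The remaining checks are routine. First, $p^*$ is a well-defined bounded linear operator from $L^2(\T^2)$ onto $H$: since $p$ is a branched cover of finite degree $N$ whose ramification locus has measure zero, $p$ is locally an isometry off a null set and (with measures normalized appropriately) $\|p^* f\|_{L^2(X)}^2 = N \|f\|_{L^2(\T^2)}^2$. Second, the subspace $H$ is $\Gamma^{\prime}$-invariant, which follows directly from the intertwining just verified. Equivariance only needs to hold almost everywhere, which causes no trouble since the branch locus is a null set. I do not anticipate any genuine obstacle: the geometric content is packaged entirely inside Lemma \ref{lem:equivariance}, and what remains is a purely formal intertwining calculation.
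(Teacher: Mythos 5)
Your proposal is correct and takes essentially the same route as the paper: the paper's proof is a one-line appeal to the equivariance of the covering map from Lemma \ref{lem:equivariance}, and your intertwining computation $(\pi_H(g)\circ p^*)(f)(x) = f(p(g^{-1}x)) = f(g^{-1}p(x)) = (p^*\circ\pi(g))(f)(x)$ is exactly the content that appeal is meant to carry. The additional checks you record (boundedness of $p^*$ off the null branch locus, invariance of $H$) are consistent with the paper's framework and fill in details the paper leaves implicit.
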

 
 \begin{proof}
 This follows from the equivariance of the covering map, Lemma \ref{lem:equivariance}. In fact, the group representation $\pi_H$ is well-defined because of Lemma \ref{lem:equivariance}.
 \end{proof}
 
 \begin{corollary} 
 With hypothesis as in Lemma \ref{lem:contravariance}, for every $g \in \Gamma^{\prime}$,  
 
 \begin{equation*}
     \lvert \lvert \pi_H(g) \rvert \rvert = \lvert \lvert \pi(g) \rvert \rvert \text{.}
 \end{equation*}
 \end{corollary}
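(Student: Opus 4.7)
The plan is to recognize that $p^{*}$ is an isometric isomorphism $L^{2}(\T^{2}) \to H$, and then to rewrite the commuting square from Lemma~\ref{lem:contravariance} as a unitary conjugacy. Since both $\pi_H(g)$ and $\pi(g)$ are unitary, the $g$-by-$g$ identity $\|\pi_H(g)\|=\|\pi(g)\|=1$ is literally trivial; the real content of the corollary, and the way it will be used later, is that the same identification extends by linearity to finite averages like $\pi_H(\mu_n)$ versus $\pi(\mu_n)$, which are the non-unitary operators that Theorem~\ref{thm:spectral} estimates.

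First I would verify that $p^{*}$ is an isometry. Since $p\colon X \to \T^{2}$ is a finite branched cover of some degree $N$, and the probability measures on $X$ and $\T^{2}$ are (up to the constant $N$) the normalized Lebesgue/area measures pulled back through $p$, the change-of-variables formula for branched covers gives
\[
\int_{X} |p^{*}f|^{2}\, d\nu_X \;=\; \int_{\T^{2}} |f|^{2}\, d\nu_{\T^{2}},
\]
after the natural normalization. By definition $H := p^{*}(L^{2}(\T^{2}))$, so $p^{*}\colon L^{2}(\T^{2}) \to H$ is a surjective linear isometry, hence an isometric isomorphism of Hilbert spaces.

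Next, Lemma~\ref{lem:contravariance} gives $\pi_{H}(g)\circ p^{*} = p^{*}\circ \pi(g)$ for every $g \in \Gamma'$. Inverting $p^{*}$ on the image $H$ yields
\[
\pi_{H}(g) \;=\; p^{*}\circ \pi(g)\circ (p^{*})^{-1},
\]
exhibiting $\pi_H(g)$ and $\pi(g)$ as operators conjugate by a Hilbert-space isometry. Operator norms are invariant under such conjugation, so $\|\pi_{H}(g)\| = \|\pi(g)\|$. The same computation applies verbatim to any finite linear combination $\sum_g c_g \pi(g)$, so we get $\|\pi_H(\mu)\| = \|\pi(\mu)\|$ for every finitely supported complex measure $\mu$ on $\Gamma'$; this is what marries the corollary to Theorem~\ref{thm:spectral}.

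I do not expect any real obstacle. The only points to be careful about are (i) normalizing $\nu_X$ and $\nu_{\T^2}$ consistently so that $p^{*}$ truly preserves the $L^{2}$-norm (and not only up to the constant $N$), and (ii) passing to the finite-index subgroup $\Gamma'$ of Lemma~\ref{lem:equivariance} so that the equivariance $p\circ g = g\circ p$, and hence the commuting diagram, is actually valid.
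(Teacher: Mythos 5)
Your proof is correct and is essentially the argument the paper leaves implicit: the corollary is stated without proof as an immediate consequence of Lemma \ref{lem:contravariance}, and the intended justification is exactly your observation that $p^{*}$ is an isometric isomorphism onto $H$ (with the probability measures normalized so that $p$ pushes $\nu_X$ to Lebesgue measure on $\T^2$), so the commuting square exhibits $\pi_H(g)$ and $\pi(g)$ as conjugate by an isometry. Your further remark---that the statement is vacuous for a single unitary $g$ and that the real content is the linear extension to averages $\pi_H(\mu_n)$ versus $\pi(\mu_n)$---is accurate and is precisely how the paper uses the result in the corollary that follows.
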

 
 As a consequence, and by applying Theorem \ref{thm:spectral}, we have the following. 
 
 \begin{corollary}\label{cor:spectral}
 Let $H_0 \subset H$ such that $H_0 = q^*(L_0^2(\T^2))$, the subspace of $H$ orthogonal to the constant functions. Let $\pi_{H_0}: \Gamma^{\prime} \to U(H_0)$, which is well-defined since the space of constant functions is invariant under the representation $\pi_H$ defined above. Let $\mu_n$ be the measure from Theorem \ref{thm:spectral}. Then
 
 \begin{equation*}
    \lvert \lvert \pi_{H_0}(\mu_n) \rvert \rvert = \lvert \lvert \pi_0(\mu_n) \rvert \rvert \leq  e^{-\frac{1}{2} \delta_{\Gamma}n + 2\log n + O(1)} \text{.}
 \end{equation*}
 \end{corollary}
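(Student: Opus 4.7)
The plan is to deduce both the equality $\lVert \pi_{H_0}(\mu_n) \rVert = \lVert \pi_0(\mu_n) \rVert$ and the subsequent upper bound by transporting the spectral estimate for the torus along the pullback $\iota$ and then invoking Theorem \ref{thm:spectral}. The preceding Lemma \ref{lem:contravariance} already gives the intertwining $\pi_H(g) \circ p^* = p^* \circ \pi(g)$ for each $g \in \Gamma^{\prime}$; the only additional ingredient is to show that this intertwining descends cleanly to the orthogonal complements of the constants and that $\iota$ is (up to a positive scalar) an isometric embedding.

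First I would observe that the space of constant functions on $\T^2$ is invariant under $\pi$, and its image under $p^*$ is exactly the space of constant functions on $X$, which is invariant under $\pi_H$. Orthogonal complements therefore give a well-defined restricted intertwiner $\iota : L_0^2(\T^2) \to H_0$ with $\iota \circ \pi_0(g) = \pi_{H_0}(g) \circ \iota$ for every $g \in \Gamma^{\prime}$. Since $p : X \to \T^2$ is a finite branched cover of some degree $N$, the change of variables formula gives $\lVert p^* f \rVert_{L^2(X,\nu)}^2 = N \lVert f \rVert_{L^2(\T^2)}^2$, so $\iota$ is $\sqrt{N}$ times a Hilbert space isomorphism of $L_0^2(\T^2)$ onto $H_0$.

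Next, I would linearly extend the intertwining to the averaged operator
\begin{equation*}
    \pi_{H_0}(\mu_n) \circ \iota \;=\; \sum_{g \in \supp \mu_n} \mu_n(g)\, \pi_{H_0}(g) \circ \iota \;=\; \iota \circ \pi_0(\mu_n).
\end{equation*}
Because conjugation by a positive scalar multiple of an isometric isomorphism preserves the operator norm, this intertwining immediately yields $\lVert \pi_{H_0}(\mu_n) \rVert = \lVert \pi_0(\mu_n) \rVert$. Theorem \ref{thm:spectral} then bounds the right-hand side by $e^{-\frac{1}{2}\delta_{\Gamma} n + 2\log n + O(1)}$, completing the chain of inequalities.

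There is essentially no serious obstacle here; the proof is an exercise in packaging Lemma \ref{lem:contravariance} together with the fact that the pullback along a finite branched cover is a conformal isometry onto its image. The only point to handle with care is ensuring that the restriction to $H_0$ really is well-defined, which amounts to checking invariance of the constants — a direct consequence of $\Gamma^{\prime}$ acting by measure-preserving maps and of $p^*$ sending constants to constants.
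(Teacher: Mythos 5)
Your proposal is correct and follows essentially the same route as the paper, which simply derives the corollary from the intertwining in Lemma \ref{lem:contravariance} (via the preceding norm-equality corollary) together with Theorem \ref{thm:spectral}; you merely spell out the details the paper leaves implicit. (One trivial remark: with $\nu$ and Lebesgue measure on $\T^2$ both normalized to be probability measures, $p^*$ is actually an isometry rather than $\sqrt{N}$ times one, but since you only use that $\iota$ is a positive scalar multiple of an isometric isomorphism onto $H_0$, this does not affect the argument.)
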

 
 The technique above provides a framework for lifting spectral estimates using a cover, so estimates on ``primitive" translation surfaces, those that do not cover (with finite branching) other translation surfaces, can be lifted to surfaces covered by the primitive surface. 
 
 \subsection{Borel-Cantelli argument}\label{ssec:bc-argument}
 
 In this section, we show how to use the spectral estimates to run a Borel-Cantelli argument similar to~\cite{Fin16}, but with variations to accommodate a reduction to a finite index subgroup and the tiling of the surface. 
 
 \begin{theorem}\label{thm:3}
 Let $(X,\omega)$ be a square-tiled (or parallelogram-tiled) surface with $M$ squares in the tiling and let $q: X \to \T$ be the branched cover over the torus. Let $\mu$ denote the normalized Lebesgue measure so that $\mu(X)=1$ and let $\Gamma \subset SL(X,\omega)$ be a convex cocompact subgroup with critical exponent $\delta_{\Gamma} > 0$. Define a sequence of measurable sets $\mathrm{Targ}_{\phi(r)}$ of measure $\phi(r)$ where $\phi: \R_{>0} \to \R_{>0}$ is a non-increasing function such that if $r_1 > r_2$, we have $\mathrm{Targ}_{\phi(r_1)} \subset \mathrm{Targ}_{\phi(r_2)}$. Moreover, we require that there exists an $R$ such that for all $r>R$, the set $\mathrm{Targ}_{\phi(r)}$ is saturated with respect to the cover, meaning $q^{-1} \circ q(\mathrm{Targ}_{\phi(x)}) = \mathrm{Targ}_{\phi(x)}$. Then for almost every $x \in X$, the set 
 
 \begin{equation*}
     \left\{g \in \Gamma : gx \in \mathrm{Targ}_{\phi(\lvert \lvert g \rvert \rvert)}\right\}
 \end{equation*}
 
 \noindent is 
 
 \begin{enumerate}
     \item finite, if $\sum_{n=1}^{\infty} n^{2\delta_{\Gamma} - 1} \phi(n) < \infty$.
     \item infinite, if $\sum_{n=1}^{\infty} n^{-(2\delta_{\Gamma} + 1)} \log^4(n) \phi(n)^{-1} < \infty$. 
 \end{enumerate}
 
 \end{theorem}

 \begin{remark} There is a gap between the finite and infinite case. If $\phi(r) = C r^{-2\delta_{\Gamma}}\log^{1+\varepsilon}(r)$ for all large $r$, where $C$ is any constant and $\varepsilon>0$, there will only be finitely many solutions. If $\phi(r) = C r^{-2\delta_{\Gamma}} \log^{5+\varepsilon}(r)$, for all large $r$, where again $C$ is any constant and $\varepsilon>0$, there will be infinitely many solutions. It may be possible to strengthen this theorem. 
 \end{remark}


 \begin{proof}
 Fix a basepoint $x_0$, and let $S_n$ be as in Theorem \ref{thm:spectral} where $S_n = S_{n,k}$ for some $k > 0$. In our proof, we will not impose any constraints on the value of $k$, but we will eventually use the conclusion of Theorem \ref{thm:spectral}.
 
 First, we show that the sum converges under the first condition, which by Borel-Cantelli (Lemma \ref{lem:bc}) implies that the set is finite.  Let $B_n = \{ g \in \Gamma : d_{\Hh}(gz_0,z_0) \leq n \}$ (as in Lemma \ref{lem:shells}). Observe that   

 \begin{enumerate} 
 \item $B_k \cup \left(\bigcup_{n = k+1}^{\infty} B_{n} \setminus B_{n-1} \right) = \Gamma$, 
 \item $(B_{n}\setminus B_{n-1}) \bigcap (B_{m}\setminus B_{m-1}) = \emptyset$ if and only if $n \neq m$ and $n, m > 0$, and
 \item $B_k \cap B_n\setminus B_{n-1} = \emptyset$ for all $n > k$.
 \end{enumerate}



 
 \noindent It follows that
 
 \begin{align*}
     \sum_{g \in \Gamma} \mu(g^{-1}\mathrm{Targ}_{\phi(\lvert \lvert g \rvert \rvert)}) &= \sum_{n=k+1}^{\infty} \sum_{g \in B_{n} \setminus B_{n-1}} \phi(\lvert \lvert g \rvert \rvert) + \sum_{g \in B_k} \phi(\lvert \lvert g \rvert \rvert),\\
 \end{align*}
 
 \noindent where the second term on the right-hand side is a sum over finitely many elements. We focus on the tail of the series. Let $N$ be sufficiently large and use the precise asymptotics of the balls, Lemma \ref{lem:shells}. Moreover, observe that Lemma \ref{lem:shell-norm-bound} implies that for any $g \in B_n$, $\lvert \lvert g \rvert \rvert \leq e^{\frac{n}{2}}$. These observations yield 
 
 \begin{align*}
     \sum_{n=N}^{\infty} \sum_{g \in B_{n} \setminus B_{n-1}} \phi(\lvert \lvert g \rvert \rvert) &\leq C_1 \sum_{n=N}^{\infty} e^{\delta_{\Gamma} n} \phi(\lvert \lvert g \rvert \rvert) \\
     &= C_1 \sum_{n=N}^{\infty} e^{\delta_{\Gamma} n}  \phi(e^{\frac{n-1}{2}}) \\
     &= C_1 \sum_{n=N+1}^{\infty} e^{\delta_{\Gamma} n}  \phi(e^{\frac{n}{2}}) \text{.}\\
 \end{align*}
 
 \noindent for some constant $C_1>0$. We apply a variation of the Cauchy condensation test to establish the criteria for convergence. For a non-increasing function $f: \N \to \R_{\geq 0}$, $\sum_{n} f(n)$ converges if and only if $\sum_{n} e^nf(e^n)$ converges. Observe that 

 \begin{align*}
     \sum_{n=N+1}^{\infty} e^{\delta_{\Gamma} n}  \phi(e^{\frac{n}{2}}) &= \sum_{n=N+1}^{\infty} e^{\frac{n}{2}} e^{\frac{n}{2}(2\delta_{\Gamma} -1)}  \phi(e^{\frac{n}{2}}) \text{.}\\
 \end{align*}
 
 \noindent By splitting the sum over the odd and even integers greater than $N + 1$, bounding the sum by twice the sum over the even integers, and re-indexing, we can use Cauchy condensation to deduce that the series above converges if and only if $\sum_{n=1}^{\infty} n^{2\delta_{\Gamma} - 1}\phi(n)$ converges. By Borel-Cantelli, we can conclude that for almost every $x \in X$ the set $$\left\{g \in \Gamma : gx \in \mathrm{Targ}_{\phi(\lvert \lvert g \rvert \rvert)}\right\}$$ is finite. 
 
 For the more difficult part of the proof, we show the set $\left\{g \in \Gamma : gx \in \mathrm{Targ}_{\phi(\lvert \lvert g \rvert \rvert)}\right\}$ is infinite when $\phi$ satisfies the condition in (2). For this part of the proof we will use the assumption that there exists an $R$ such that for every $r > R$, $q^{-1}\circ q (\mathrm{Targ_{\phi(r)}}) = \mathrm{Targ_{\phi(r)}}$ (the target is a saturated set with respect to the cover). 
 
 In this part of the proof, a key step requires applying Theorem \ref{thm:spectral}, so we will use the shells $S_n$. For $n > k$, let 
 
 \begin{equation*}
 E_n =  X \setminus \bigcup_{g \in S_{n}} g^{-1}\mathrm{Targ}_{\phi(\lvert \lvert g \rvert \rvert)}\text{.}
 \end{equation*}
 
 \noindent $E_n$ is the set of points $x$ such that $gx$ is not in its target $\mathrm{Targ}_{\phi(\lvert \lvert g \rvert \rvert)}$ for all $g \in S_{n}$. Let $$E = \limsup_{n \to \infty} E_n \text{.}$$ $E$ consists of the points for which there are infinitely many $n$ such that for all $g \in S_{n}$, $gx$ is not in the target. The complement of this set is the set of $x \in X$ such that there are only finitely many $n$ such that $gx$ misses the target for all $g \in S_{n}$. This is a subset of the points which hit the target infinitely often. If we show that $\mu(E) = 0$, then we will have shown that the set of $x \in X$ which land in the target infinitely often has full measure. 
 
 We begin by reducing to a finite index subgroup $\Gamma^{\prime} \subset \Gamma$ so that we can apply the results of Theorem \ref{thm:spectral}. For finite index subgroups of convex cocompact subgroups we have that $\delta_{\Gamma^{\prime}} = \delta_{\Gamma}$. Moreover, the finite index subgroup is convex cocompact (being finitely generated without parabolic elements). 
 
 Let $\mathrm{Targ}_n$ denote the set $\mathrm{Targ}_{\phi\left(e^{\frac{n}{2}}\right)}$ and let $\chi_{\mathrm{Targ_n}}$ be the characteristic function of that set. Let $\chi_{E_n}$ be the characteristic of the set $E_n$. Let $M$ be the number of squares tiling $(X,\omega)$ and recall that $q: X \to \T^2$ is the covering map. We will define two bounded linear operators. First, define $A: L^2(X) \to H$ by 
 
 \begin{equation*}
     A(f) = \tilde{f}
 \end{equation*}
 
 \noindent where $\tilde{f}(x) = \frac{1}{M} \sum_{y \in q^{-1}\circ q(x)} f(y)$. Note that $\int_X A(f) \,d\mu = \int_X f\, d\mu$.
 
 \noindent Now define the orthogonal projection $P: L^2(X) \to H_0$ by 
 
 \begin{equation*}
     P(f) = A(f) - \int_X f \, d\mu \text{.}
 \end{equation*}
 
 \noindent $P$ is a self-adjoint, idempotent operator. First, $\langle Pf, g \rangle = \langle f, Pg \rangle$:
 
 \begin{align*}
     \langle Pf, g \rangle - \langle f, Pg \rangle &= \int_X \left(A(f) - \int_X f \,d\mu\right) g \, d\mu - \int_X \left(A(g) - \int_X g \,d\mu\right) f \, d\mu\\
     &= \int_X A(f) g \,d\mu -\int_X f\,d\mu \int_X g\,d\mu - \int_X A(g)f \, d\mu + \int_X f \, d\mu \int_X g\,d\mu \\
     &= \int_X A(f) g \,d\mu - \int_X A(g)f \, d\mu \\
     &= 0 \text{,}
 \end{align*}
 
 \noindent where the last line follows from considering the integral over each square. 
 
 \noindent Second, $P^2(f) = P(f)$:
 
 \begin{align*}
     P^2(f) &= P(A(f) - \int_X f \, d\mu) \\
     &= P(A(f) - \int_X A(f) \, d\mu) \\
     &= P(A(f)) - P(\int_X A(f) \, d\mu) \\
     &= A(A(f)) - \int_X A(f)\, d\mu - 0 \\
     &= A(f) - \int_X f \, d\mu \\
     &= P(f)\text{.}
 \end{align*}
 
 We can project the characteristic functions $\chi_{\mathrm{Targ}_n}$ and $\chi_{E_n}$ to $H_0$:
 
 \begin{align*}
     T_n &:= P(\chi_{\mathrm{Targ}_n}) = A(\chi_{\mathrm{Targ}_n}) - \mu(\mathrm{Targ}_n) \\ 
     \mathrm{Bad}_n &:= P(\chi_{E_n}) = A(\chi_{E_n}) - \mu(E_n) \text{.}\\ 
 \end{align*}
 
 \noindent Now observe that 
 
 \begin{align*}
     \lvert \lvert T_n \rvert \rvert^2_2 &\leq (1 - \mu(\mathrm{Targ}_n))\mu(\mathrm{Targ}_n) \leq \mu(\mathrm{Targ}_n)\\
     \lvert \lvert \mathrm{Bad}_n \rvert \rvert^2_2 &\leq (1 - \mu(E_n))\mu(E_n) \leq \mu(E_n) \text{.}\\
 \end{align*}

 \noindent Indeed, consider $\lvert \lvert T_n \rvert \rvert^2_2$. Define sets $$A_i = \left\{ x \in X \, : \, A(\chi_{\mathrm{Targ_n}})(x) = \frac{i}{M} \right\}$$ for $i \in \{0, 1, 2, \cdots, M\}$. Observe that $X = \bigsqcup_{i=0}^M A_i$, where the $A_i$ are disjoint, and $\sum_{i=1}^M \frac{i}{M} \mu(A_i) = \mu(\mathrm{Targ}_n)$:

 \begin{equation*}
 \mu(\mathrm{Targ}_n) = \int_X \chi_{\mathrm{Targ}_n} \, d\mu = \int_X A(\chi_{\mathrm{Targ}_n}) \, d\mu = \int \sum_{i=1}^M \frac{i}{M} \chi_{A_i} \, d\mu = \sum_{i=1}^M \frac{i}{M} \mu(A_i) \text{.}
 \end{equation*}

 \noindent Then, 

 \begin{align*}
 \lvert \lvert T_n \rvert \rvert^2_2 & = \int_X \left\lvert A(\chi_{\mathrm{Targ_n}}) - \mu(\mathrm{Targ_n}) \right\rvert^2 \, d\mu \\
 &= \int_X \left( A(\chi_{\mathrm{Targ}_n}) \right)^2 d\mu \, - \, 2\mu(\mathrm{Targ}_n) \int_X A(\chi_{\mathrm{Targ}_n}) \, d\mu \, + \, \mu(\mathrm{Targ}_n)^2\\
 &= \int_X \left( A(\chi_{\mathrm{Targ}_n}) \right)^2 d\mu \, - \, \mu(\mathrm{Targ}_n)^2\\
 &= \sum_{i=1}^{M} \int_{A_i} \left( \frac{i}{M} \right)^2 d\mu \, - \, \mu(\mathrm{Targ}_n)^2\\
 &= \sum_{i=1}^{M} \left( \frac{i}{M} \right)^2 \mu(A_i) \, - \, \mu(\mathrm{Targ}_n)^2\\
 &\leq \sum_{i=1}^{M} \left( \frac{i}{M} \right) \mu(A_i) \, - \, \mu(\mathrm{Targ}_n)^2\\
 &= \mu(\mathrm{Targ}_n) \, - \, \mu(\mathrm{Targ}_n)^2\\
 &= (1 - \mu(\mathrm{Targ}_n))\mu(\mathrm{Targ}_n) \\
 \end{align*}

 \noindent The computation is similar for $\lvert \lvert \mathrm{Bad}_n \rvert \rvert^2_2$.
 
 \noindent Moreover, 
 
 \begin{align*}
     \langle T_n, \mathrm{Bad}_n \rangle &= \int_X \left( A(\chi_{\mathrm{Targ_n}}) - \mu(\mathrm{Targ_n}) \right) \left( A(\chi_{E_n}) - \mu(E_n) \right) \, d\mu\\
     &= \int_X A(\chi_{\mathrm{Targ_n}}) A(\chi_{E_n}) \, d\mu - \mu(E_n) \int_X A(\chi_{\mathrm{Targ_n}}) \, d\mu \, - \, \mu(\mathrm{Targ}_n)\int_X A(\chi_{E_n})\, d\mu + \mu(\mathrm{Targ}_n) \mu(E_n)\\
     &= \int_X A(\chi_{\mathrm{Targ}_n}) A(\chi_{E_n}) \, d\mu - \mu(\mathrm{Targ}_n) \mu(E_n) \text{.} \\
 \end{align*}

 We will show that provided $n > 2 \log{R}$, where $R$ comes from our Theorem statement, and that $g \in S_{n}$, we have 

 \begin{equation*}
 \left\lvert \langle \pi_{H_0}(g^{-1}) T_n, \mathrm{Bad}_n \rangle \right\rvert = \mu(\mathrm{Targ}_n) \mu(E_n)
 \end{equation*}
 
 \noindent Fix any $g \in S_{n} \subset \Gamma^{\prime}$ and observe that $g^{-1} \in S_{n}$. We have 
 
 \begin{align*}
     \langle \pi_{H_0}(g^{-1}) T_n, \mathrm{Bad}_n \rangle &= \int_X A(\chi_{\mathrm{Targ}_n}(gx)) A(\chi_{E_n}(x)) \, d\mu(x) - \mu(\mathrm{Targ}_n) \mu(E_n) \\
     &= \int_X A(\chi_{g^{-1}\mathrm{Targ}_n}(x)) A(\chi_{E_n}(x)) \, d\mu(x) - \mu(\mathrm{Targ}_n) \mu(E_n) \\
     &= \int_X \left(\frac{1}{M} \sum_{y \in q^{-1}\circ q (x)} \chi_{g^{-1}\mathrm{Targ}_n}(y) \right) \left( \frac{1}{M} \sum_{ y \in q^{-1}\circ q (x)} \chi_{E_n}(y) \right) \, d\mu(x) - \mu(\mathrm{Targ}_n) \mu(E_n) \text{.} \\
 \end{align*}

 \noindent The integrand is zero. To be non-zero at $x \in X$, there must exists two points $y_1, y_2 \in q^{-1} \circ q(x)$ such that $y_1 \in g^{-1}\mathrm{Targ}_n$ and $y_2 \in E_n$. Given our assumption that the target set is saturated with respect to the cover (we pick $n > 2 \log{R}$), this is not possible. If $\mathrm{Targ}_n$ is a saturated set, then so is $g^{-1}\mathrm{Targ}_n$ ($g^{-1}$ is a continuous map that commutes with the cover). If $y_1 \in g^{-1}\mathrm{Targ}_n$, then every $y \in q^{-1} \circ q(x)$ is also in $g^{-1}\mathrm{Targ}_n$. This means that $y_2$ must be in both $g^{-1}\mathrm{Targ}_n$ and $E_n$. However, if

 \begin{equation*}
 g(y_2) \in \mathrm{Targ}_{n} 
 \end{equation*}

 \noindent and 

 \begin{equation*}
 h(y_2) \notin \mathrm{Targ}_{\phi(\lvert \lvert h \rvert \rvert)}
 \end{equation*}

 \noindent for all $h \in S_n$, we get a contradiction. $g(y_2) \in \mathrm{Targ}_n = \mathrm{Targ}_{\phi\left(e^{\frac{n}{2}}\right)} \subset \mathrm{Targ_{\phi(\lvert \lvert g \rvert \rvert)}}$, but $g \in S_{n}$. Thus, we can conclude that for $n > 2 \log(R)$ and $g \in S_n$,

 \begin{equation*}
 \left\lvert \langle \pi_{H_0}(g^{-1}) T_n, \mathrm{Bad}_n \rangle \right\rvert = \mu(\mathrm{Targ}_n) \mu(E_n) \text{.}
 \end{equation*}

 \noindent Moreover, if we let $\mu_n$ be a uniform probability distribution on $S_n$, we can conclude

 \begin{equation*}
 \left\lvert \langle \pi_{H_0}(\mu_n) T_n, \mathrm{Bad}_n \rangle \right\rvert = \mu(\mathrm{Targ}_n) \mu(E_n) \text{.}
 \end{equation*}

 \noindent Using Cauchy-Schwarz on this inner product as well as the bounds on $\lvert \lvert T_n \rvert \rvert_2^2$ and $\lvert \lvert \mathrm{Bad}_n \rvert \rvert_2^2$, we can relate the operator norm of $\lvert \lvert \pi_{H_0}(\mu_{n}) \rvert \rvert$ to the measures $\mu(E_n)$ and $\mu(\mathrm{Targ}_n)$. 
 
 \begin{equation*}
     \left\lvert \langle (\pi_{H_0}(\mu_{n})) T_n, \mathrm{Bad}_n \rangle \right\rvert \leq \left(\lvert \lvert \pi_{H_0}(\mu_{n})\rvert \rvert \right) \, \mu(\mathrm{Targ}_n)^{\frac{1}{2}} \mu(E_n)^{\frac{1}{2}}
 \end{equation*}
 
 \noindent By applying the spectral estimate in Corollary \ref{cor:spectral} in combination with the previous two equations, we can deduce that for some constant $C_2 > 0$ 
 
 \begin{align*}
     \mu(\mathrm{Targ}_n) \mu(E_n) & \leq \left(\lvert \lvert \pi_{H_0}(\mu_{n})\rvert \rvert \right) \, \mu(\mathrm{Targ}_n)^{\frac{1}{2}} \mu(E_n)^{\frac{1}{2}}\\
     \mu(E_n) & \leq \left(\lvert \lvert \pi_{H_0}(\mu_{n})\rvert \rvert \right)^2 \, \mu(\mathrm{Targ}_n)^{-1} \\
     &\leq C_2 n^4 e^{-\delta_{\Gamma}n} \phi(e^\frac{n}{2})^{-1} \text{.} \\
 \end{align*} 
 
 \noindent Thus,  
 
 \begin{equation*}
     \sum_{n> 2\log R}^{\infty} \mu(E_n) \leq C_2 \sum_{n> 2\log R}^{\infty} n^4  e^{-\delta_{\Gamma}n} \phi(e^\frac{n}{2})^{-1}  
 \end{equation*} 
 
 \noindent In order for $\sum_n \mu(E_n)$ to converge, which by the Borel-Cantelli lemma would imply that $\mu(E) = 0$, we need the following sum to converge:

 \begin{equation*}
 \sum_{n> 2 \log R}^{\infty} n^4 e^{-2\delta_{\Gamma}} \phi(e^n)^{-1} \text{.}
 \end{equation*}

 \noindent By using the same variation of the Cauchy condensation test as we used prior, we deduce that convergence the sum is equivalent to convergence of the following sum.
 
 \begin{equation*}
     \sum_{n=1}^{\infty} (\log n)^4 n^{-(2\delta_{\Gamma} + 1)} \phi(n)^{-1}
 \end{equation*}
 
 \noindent This completes the proof for target sets $\mathrm{Targ_{\phi(r)}}$ that are eventually saturated.

 \end{proof}

 If we further assume that the square-tiled surface is a regular square-tiled surface, then we can conclude that each sheet in a saturated target is hit infinitely often. This allows us to remove the assumption of a saturated set. However, to do this, we need to add the assumption that the target sets become sufficiently small in the sense that eventually, the projection of the target is an evenly covered set.

  \begin{theorem}\label{thm:4}
  Let $(X,\omega)$ be a regular square-tiled (or parallelogram-tiled) surface with $M$ squares in the tiling and let $q: X \to \T$ be the branched cover over the torus. Let $\mu$ denote the normalized Lebesgue measure so that $\mu(X)=1$ and let $\Gamma \subset SL(X,\omega)$ be a convex cocompact subgroup with critical exponent $\delta_{\Gamma} > 0$. Define a sequence of measurable sets $\mathrm{Targ}_{\phi(r)}$ of measure $\phi(r)$ where $\phi: \R_{>0} \to \R_{>0}$ is a non-increasing function such that if $r_1 > r_2$, we have $\mathrm{Targ}_{\phi(r_1)} \subset \mathrm{Targ}_{\phi(r_2)}$. Moreover, we require that there exists an $R$ such that for all $r>R$, the set $q(\mathrm{Targ}_{\phi(r)})$ is evenly covered and that one of the sheets is $\mathrm{Targ}_{\phi(r)}$. Then for almost every $x \in X$, the set 
 
 \begin{equation*}
     \left\{g \in \Gamma : gx \in \mathrm{Targ}_{\phi(\lvert \lvert g \rvert \rvert)}\right\}
 \end{equation*}
 
 \noindent is 
 
 \begin{enumerate}
     \item finite, if $\sum_{n=1}^{\infty} n^{2\delta_{\Gamma} - 1} \phi(n) < \infty$.
     \item infinite, if $\sum_{n=1}^{\infty} n^{-(2\delta_{\Gamma} + 1)} \log^4(n) \phi(n)^{-1} < \infty$. 
 \end{enumerate}
 
 \end{theorem}

 \begin{proof} As in the proof of Theorem \ref{thm:3}, the finite case follows from a direct application of Borel-Cantelli, Lemma \ref{lem:bc}. The infinite case follows from Theorem \ref{thm:3}, and the fact that a regular square-tiled surface has a transitive automorphism group. First, observe that Theorem \ref{thm:3} implies $$\left\{g \in \Gamma : gx \in q^{-1}\circ q\left(\mathrm{Targ}_{\phi(\lvert \lvert g \rvert \rvert)}\right)\right\}$$ is infinite when $\phi$ is as in the theorem. Morever, we can restrict to $g \in \Gamma$ such that $\| g \| > R$, and the set $$\left\{g \in \Gamma : \|g\| > R \text{ and } gx \in q^{-1}\circ q\left(\mathrm{Targ}_{\phi(\lvert \lvert g \rvert \rvert)}\right)\right\}$$ remains infinite. Indeed, the proof of Theorem \ref{thm:3} shows that for almost every $x \in X$, there are infinitely many $n$ such that there exists a $g \in S_n$ such that $gx$ hits the target. 
 
 By assumption, for any $\epsilon > 0$, $q(\mathrm{Targ}_{\phi(R+\varepsilon)})$ is evenly covered. Consider the sheets of $q(\mathrm{Targ}_{\phi(R+\varepsilon)})$. Corresponding to each sheet is a sequence of measurable sets contained in that sheet, where each measurable set is a lift of $q(\mathrm{Targ}_{\phi(r)})$ for $r > R$. Since $\mathrm{Targ}_{\phi(r_1)} \subset \mathrm{Targ}_{\phi(r_2)}$ for $r_1 > r_2$, the sequences corresponding to each sheet satisfy the inclusion property. Further, by the pigeonhole principle, one of the sequences is hit infinitely often. If this sequence is $\mathrm{Targ}_{\phi(r)}$, then we are done. If not, call this sequence $U_r$. 
 
 The action of the automorphism group on the squares is transitive and $q(\mathrm{Targ}_{\phi(\|g\|)})$ is evenly covered, so there exists an automorphism $f: X \to X$ such that $\mathrm{Targ}_{\phi(r)}$ is mapped to $U_r$, for all $r$. For almost every $x \in X$, we know that the set $\left\{g \in \Gamma : \|g\| > R \text{ and } f^{-1}(gx) \in f^{-1}(U) = \mathrm{Targ}_{\phi(\lvert \lvert g \rvert \rvert)}\right\}$ is infinite. 
 
 We make two observations. First, recall that the Veech group $SL(X,\omega)$ is the image of the derivative map $D: \mathrm{Aff}(X,\omega) \to SL(X,\omega)$, where $\mathrm{Aff}(X,\omega)$ is the group of affine transformations of the surface. Moreover, the kernel of this map is the set of automorphisms of the translation structure, $\mathrm{Aut}(X,\omega)$, hence the set of automorphisms is a normal subgroup of the group of affine transformations. See, for example, ~\cite{HubSch04}. We can inject $SL(X,\omega)$ into the group of affine transformation: an element from $SL(X, \omega)$ is an affine transformation, but with no translation component. Since the automorphisms are a normal subgroup, if we conjugate the automorphism $f^{-1}$ by an element $g \in \Gamma$, the result is another automorphism. Call it $h$. 
 
 \begin{equation*}
    \begin{tikzcd}
        X \ar[r, dashed, "h"] \ar[d, "g"] & X \ar[d, "g"]\\
        X  \ar[r, "f^{-1}"]  & X 
    \end{tikzcd}
 \end{equation*}

 \noindent This gives us an equivalence of the sets: $$\left\{g \in \Gamma : \|g\| > R \text{ and } f^{-1}(gx) \in \mathrm{Targ}_{\phi(\lvert \lvert g \rvert \rvert)}\right\} = \left\{g \in \Gamma : \|g\| > R \text{ and } gh(x) \in \mathrm{Targ}_{\phi(\lvert \lvert g \rvert \rvert)}\right\} \text{.}$$

 Second, let $\tilde{X}$ be the full measure set such that $\left\{g \in \Gamma : g(h(x)) \in \mathrm{Targ}_{\phi(\lvert \lvert g \rvert \rvert)}\right\}$ is infinite. Observe that $h$ is not only an automorphism, but an invertible, measure-preserving transformation. Hence, $h(\tilde{X})$ is a full measure set. This second observation completes the proof. 
 \end{proof}

 Theorem \ref{thm:1} follows from Theorem \ref{thm:4}. Setting $\phi(r) = C\pi r^{-2\alpha}$ where $C$ is a constant correcting for normalization of the measure, we see that Theorem \ref{thm:1} holds for convex cocompact subgroups of the Veech group of regular square-tiled surfaces. To extend the result to all groups we employ the following lemma. 
 
 \begin{lemma}~\cite{Fin16}\label{lem:bootstrap}
 Let $\Gamma \subset SL_2(\Z)$. For any $\varepsilon > 0$, there exists a convex cocompact subgroup $\Gamma^{\prime} \subset \Gamma$ such that $\delta_{\Gamma^{\prime}} > \delta_{\Gamma} - \varepsilon$. 
 \end{lemma}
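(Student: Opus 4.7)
The plan is to approximate $\delta_\Gamma$ via the Hausdorff-dimension characterization of the radial limit set and then realize a nearby value as the critical exponent of an explicit Schottky subgroup of $\Gamma$. We may assume $\Gamma$ is non-elementary, since otherwise $\delta_\Gamma = 0$ and any cyclic hyperbolic subgroup (or the trivial subgroup) works.

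First I would fix $\varepsilon > 0$ and invoke Sullivan's theorem that $\delta_\Gamma = \dim_H \Lambda_r(\Gamma)$, where $\Lambda_r \subset S^1$ is the conical limit set. Choose a compact subset $K \subset \Lambda_r$ with $\dim_H K \geq \delta_\Gamma - \varepsilon/2$. Because every point of $K$ is a conical limit point, a shadow-lemma argument produces a rich family of hyperbolic elements of $\Gamma$, each mapping a definite arc of $S^1$ onto a small arc near a chosen point of $K$ with uniformly bounded distortion. This converts the abstract dimension estimate into a concrete system of Möbius contractions sitting inside $\Gamma$.

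Next I would run a ping-pong construction to extract finitely many elements $g_1, \ldots, g_m \in \Gamma$ whose attracting arcs and repelling arcs are pairwise disjoint (taking high powers if necessary to guarantee disjointness). The resulting subgroup $\Gamma' := \langle g_1, \ldots, g_m \rangle$ is then a finitely generated, free, purely hyperbolic Fuchsian group, i.e.\ a Schottky group, and hence convex cocompact.

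Finally I would lower-bound $\delta_{\Gamma'}$. The limit set of $\Gamma'$ is the self-similar attractor of the iterated function system $\{g_j\}_{j=1}^m$, so $\delta_{\Gamma'}$ is determined by Bowen's pressure formula applied to the Möbius derivatives of the generators. By choosing the ping-pong family to shadow an efficient cover of $K$ from the first step, one can tune these derivatives so that $\delta_{\Gamma'} \geq \delta_\Gamma - \varepsilon$. I expect the main obstacle to be the first step, namely the Bishop–Jones / Patterson–Sullivan construction of an IFS inside $\Gamma$ that faithfully tracks $\dim_H \Lambda_r$, since this requires careful uniform distortion estimates on $\Gamma$-orbits in $\Hh$; the subsequent ping-pong step and pressure computation are then standard.
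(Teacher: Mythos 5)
Your outline is, in substance, the standard proof of this fact and the one underlying the cited source: the paper gives no argument for Lemma \ref{lem:bootstrap} beyond the reference to~\cite{Fin16}, and the result rests on the Bishop--Jones construction of Schottky subgroups whose limit sets approximate the Hausdorff dimension of the radial limit set --- precisely the shadow-lemma, ping-pong, and pressure-formula steps you describe. The later steps are fine: a ping-pong family of hyperbolic elements with disjoint attracting and repelling arcs generates a free, purely hyperbolic, hence convex cocompact subgroup, and for such a subgroup $\delta_{\Gamma^{\prime}}$ equals the Hausdorff dimension of its limit set, so shadowing an efficient cover of $K$ gives the desired lower bound.

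There is one concrete error: the reduction to the non-elementary case. An elementary subgroup generated by a parabolic, e.g.\ $\Gamma=\bigl\langle\left[\begin{smallmatrix}1&1\\0&1\end{smallmatrix}\right]\bigr\rangle\subset SL_2(\Z)$, has $\delta_{\Gamma}=\tfrac12$ (since $d_{\Hh}(i,i+n)\sim 2\log n$, the orbit count grows like $e^{R/2}$), not $0$, and its only convex cocompact subgroup is the trivial one; so for $\varepsilon<\tfrac12$ the statement actually fails for this $\Gamma$. The lemma therefore implicitly assumes $\Gamma$ is non-elementary (as does Sullivan's identity $\delta_{\Gamma}=\dim_H\Lambda_r$, which you invoke), and your sketch should impose that hypothesis explicitly rather than dispose of the elementary case with the false claim that $\delta_{\Gamma}=0$. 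Beyond that, the only incompleteness is the one you flag yourself: the uniform distortion estimates needed to convert $\dim_H K\geq\delta_{\Gamma}-\varepsilon/2$ into a concrete contracting system inside $\Gamma$, which is exactly the technical heart of the Bishop--Jones argument and would need to be carried out (or cited) in full.
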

 
 \thmA*
  
 \begin{proof} For the first part of the proof, to show the set is finite for every $\alpha > \delta_{\Gamma}$, use the definition of the critical exponent, Definition \ref{critexp}. Fix $\alpha$, and for sufficiently large $N$, the asymptotics of $B_n$ (and $S_n$) are within $\varepsilon$ where $2\varepsilon < \alpha - \delta_{\Gamma}$, so the same argument as in the proof of Theorem \ref{thm:3} will work:
 
 \begin{align*}
     \sum_{g \in \Gamma} \mu(g^{-1}\mathrm{Targ}_{\phi(\lvert \lvert g \rvert \rvert)}) &\leq C \sum_{n=N+1}^{\infty} e^{(\delta_{\Gamma} + \varepsilon) n}  \phi(e^{\frac{n}{2}}) \\
     &\leq C \sum_{n=N+1}^{\infty} e^{(\alpha - \varepsilon)n}  \phi(e^{\frac{n}{2}})\text{.}\\
 \end{align*}

 \noindent We can use Cauchy condensation to deduce that the tail converges if and only if $\sum_{n=N+1}^{\infty} n^{2(\alpha - \epsilon) - 1}\phi(n)$ converges. Pick $\phi(r) = C\pi r^{-2\alpha}$, where $C$ is a constant correcting for normalization of the measure, and by Borel-Cantelli, we can conclude that for almost every $x \in X$ the set $$\left\{g \in \Gamma : gx \in \mathrm{Targ}_{\phi(\lvert \lvert g \rvert \rvert)}\right\}$$ is finite.  

 For the second part of the statement, we leverage Lemma \ref{lem:bootstrap}. For any $\varepsilon > 0$, there exists a convex cocompact subgroup $\Gamma^{\prime} \subset \Gamma$ such that $\delta_{\Gamma^{\prime}} > \delta_{\Gamma} - \varepsilon$. By applying Theorem \ref{thm:3}, we know that the set 

 \begin{equation*}
 \left\{g \in \Gamma^{\prime} \subset \Gamma : gx \in \mathrm{Targ}_{\phi(\lvert \lvert g \rvert \rvert)}\right\}
 \end{equation*}

 \noindent has infinitely many elements provided $\sum_{n=1}^{\infty} (\log n)^4 n^{-(2\delta_{\Gamma^{\prime}} + 1)} \phi(n)^{-1} < \infty$. Thus, we need 

 \begin{equation*}
 \phi(r) \geq C r^{-2\delta_{\Gamma^{\prime}}} \log^{5+\varepsilon}(r)
 \end{equation*}
 
 \noindent for all sufficiently large $r$. Recall that for our choice of $\varepsilon > 0$ above, and any $M \geq 0$, for all $r$ sufficiently large, $r^{\varepsilon} > \log^M(r)$. Since $\delta_{\Gamma} < \delta_{\Gamma^{\prime}} + \varepsilon$, for all sufficiently large $r$, we have
 
 \begin{equation*}
 r^{-2\delta_{\Gamma} + 3\varepsilon} > r^{-2(\delta_{\Gamma^{\prime}}+ \varepsilon) +3\varepsilon} = r^{-2\delta_{\Gamma^{\prime}}+ \varepsilon} > r^{-2\delta_{\Gamma^{\prime}}} \log^{5 + \varepsilon}(r) \text{.}
 \end{equation*}

 \noindent Hence, we can pick $\phi(r) = C \pi r^{-2\alpha}$ for any $C$ and any $\alpha < \delta_{\Gamma}$ and conclude that there will be infinitely many elements in the set. 
 \end{proof}

 
 \begin{bibdiv}
 \begin{biblist}
 
 \bib{At09}{article}{
   title={Logarithm laws and shrinking target properties},
   author={Athreya, J.},
   journal={Proc. Indian Acad. Sci.},
   volume={119, no. 4},
   pages={541-557},
   date={2009}
   }
 
 \bib{AtMa09}{article}{
   title={Logarithm laws for unipotent flows, I},
   author={Athreya, J.},
   author={Margulis, G.},
   journal={Journal of Modern Dynamics},
   volume={3, no. 3},
   pages={359-378},
   date={2009}
   }
  
 \bib{AtMa17}{article}{
   title={Logarithm laws for unipotent flows, II},
   author={Athreya, J.},
   author={Margulis, G.},
   journal={Journal of Modern Dynamics},
   volume={11},
   pages={1-16},
   date={2017}
   }


 \bib{Bea68}{article}{
   title={The exponent of convergence of Poincar\'e series},
   author={Beardon, R.},
   journal={Proceedings of the London Mathematical Society},
   volume={s3-18, Issue 3},
   pages={461-483},
   date={1968}
   }
 
 

 \bib{BHM11}{article}{
   title={Harmonic measures versus conformal measures for hyperbolic groups},
   author={Blach\`ere, S.},
   author={Ha\"issinsky, P.},
   author={Mathieu, P.},
   journal={Annales scientifiques de l'\'Ecole Normale Sup\'erieure},
   volume={S\'erie 4, Tome 44 no. 4},
   pages={683-721},
   date={2011}
   }  

 \bib{BH99}{book}{
   title={Metric Spaces of Non-positive Curvature},
   author={Bridson, M.},
   author={Haefliger, A.},
   date={1999},
   publisher={Springer-Verlag},
   address={Berlin}
   }
   
   
   

 \bib{Che17}{article}{
   title={Teichm\"uller Dynamics in the eyes of an algebraic geometer},
   author={Chen, D.},
   journal={Proc. Sympos. Pure Math.},
   volume={95},
   pages={171-197},
   date={2017}
   }
   
   
   

 \bib{C93}{article}{
   title={Mesures de Patterson-Sullivan sur le bord d'un espace hyperbolique au sense de Gromov},
   author={Coornaert, M.},
   journal={Pacific Journal of Mathematics},
   volume={159},
   number={2},
   pages={241-270},
   date={1993}
   }

  \bib{FM12}{book}{
    title={A Primer on Mapping Class Groups},
    author={Farb, B.},
    author={Margalit, D.},
    date={2012},
    publisher={Princeton University Press},
    address={41 William Street, Princeton}
    }
   
  \bib{Fa06}{article}{
   title={Mixing in the absence of the shrinking target property},
   author={Fayad, B.},
   journal={Bull. London Math. Soc.},
   volume={38 no. 5},
   pages={829-838},
   date={2006}
   }
   
 \bib{Fin16}{article}{
   title={Diophantine properties of groups of toral automorphisms},
   author={Finkelshtein, V.},
   journal={preprint},
   date={2016}
   }
   



 \bib{GuJu96}{article}{
   title={The geometry and arithmetic of translation surfaces with applications to polygonal billiards},
   author={Gutkin, E.},
   author={Judge, C.},
   journal={Mathematical Research Letters},
   volume={3},
   pages={391-403},
   date={1996}
   }
   
 \bib{GuJu00}{article}{
   title={Affine Mappings of Translation Surfaces: Geometry and Arithmetic},
   author={Gutkin, E.},
   author={Judge, C.},
   journal={Duke Mathematical Journal},
   volume={103, no. 2},
   pages={191-213},
   date={2000}
   }
   
   

 \bib{HiVe95}{article}{
   title={The ergodic theory of shrinking targets},
   author={Hill, R.},
   author={Velani, S.},
   journal={Inventiones Mathematicae},
   volume={119},
   pages={175-198},
   date={1995}
   }
   

 \bib{Hil08}{article}{
   title={Spectral decomposition of square-tiled surfaces},
   author={Hillairet, L.},
   journal={Mathematische Zeitschrift},
   volume={vol. 260, no. 2},
   date={2008}
   }

 \bib{Hil09}{article}{
   title={Spectral theory of translation surfaces: A short introduction},
   author={Hillairet, L.},
   journal={S\'eminaire de th\'eorie spectrale et g\'eom\'etrie},
   volume={vol. 28},
   date={2009-2010},
   pages={51-62}
   }

 \bib{HubSch04}{article}{
   title={An introduction to Veech surfaces},
   author={Hubert, P.},
   author={Schmidt, T.},
   editor={Katok, A.},
   editor={Hasselblatt, B.},
   journal={Handbook of Dynamical Systems},
   volume={vol. 1B},
   publisher={Elsevier},
   date={2006}
   }

   
   
   
 \bib{K92}{book}{
   title={Fuchsian Groups},
   author={Katok, S.},
   date={1992},
   publisher={The University of Chicago Press},
   address={Chicago}
   }

 
 \bib{KlMa99}{article}{
   title={Logarithm laws for flows on homogeneous spaces},
   author={Kleinbock, D.},
   author={Margulis, G.},
   journal={Inventiones Mathematicae},
   volume={138, no. 3},
   date={1999},
   pages={451-494}
   }

   
   
   

 \bib{Ma18}{article}{
   title={Three lectures on square-tiled surfaces},
   author={Matheus, C.},
   date={2018}
   }
   
   
  

 \bib{Pat76}{article}{
   title={The exponent of convergence for Poincar\'e series},
   author={Patterson, S.J.},
   journal={Monatshefte f\"ur Mathematik},
   volume={82},
   pages={297-315},
   date={1976}
   }
 
 \bib{Phi67}{article}{
   title={Some metrical theorems in number theory},
   author={Philipp, W.},
   journal={Pacific Journal of Mathematics},
   volume={vol. 20, no. 1},
   date={1967}
   }

 \bib{Q}{article}{
   title={An overview of Patterson-Sullivan theory},
   author={Quint, J.F.},
   }

   
   
   

 \bib{Sul79}{article}{
   title={The density at infinity of a discrete group of hyperbolic motions},
   author={Sullivan, D.},
   journal={Publications math\'ematiques de l’I.H.\'E.S.},
   volume={50},
   date={1979},
   pages={171-202}
   }

 \bib{Sul82}{article}{
   title={Disjoint spheres, approximation by imaginary quadratic numbers, and the logarithm law for geodesics},
   author={Sullivan, D.},
   journal={Acta Math.},
   volume={149},
   date={1982},
   pages={215-237}
   }
   



 \bib{Th88}{article}{
   title={On the geometry and dynamics of diffeomorphisms of surfaces},
   author={Thurston, W.},
   journal={Bulletin of the Amer. Math. Soc.},
   volume={19 no. 2},
   pages={417-431},
   date={1988}
   }

   

 
 
   \bib{Wri15}{article}{
   title={Translation surfaces and their orbit closures},
   subtitle={An introduction for a broad audience},
   author={Wright, A.},
   journal={EMS Surv. Math. Sci.},
   date={2015}
   }
   
   \bib{Vee89}{article}{
   title={Teichm\"uller curves in moduli space, Eisenstein series and an application to triangular billiards},
   author={Veech, W.A.},
   journal={Inventiones Mathematicae},
   volume={97},
   pages={553-583},
   date={1989}
   }
   
   
 
 \bib{Zm11}{article}{
   title={Origamis and permutation groups},
   author={Zmiaikou, D.},
   journal={PhD Thesis, University Paris-Sud},
   date={2011}
   }
 
 \bib{Zor06}{article}{
   title={Flat surfaces},
   author={Zorich, A.},
   journal={Frontiers in Number Theory, Physics, and Geometry},
   volume={vol. 1},
   date={2006}
   }

 \end{biblist}
 \end{bibdiv}

 \end{document}